\newtheorem{theorem}{Theorem}[section]
\newtheorem{proposition}[theorem]{Proposition}
\newtheorem{lemma}[theorem]{Lemma}
\newtheorem{corollary}[theorem]{Corollary}
\theoremstyle{definition}
\newtheorem{definition}[theorem]{Definition}
\newtheorem{example}[theorem]{Example}
\newtheorem{construction}[theorem]{Construction}
\theoremstyle{remark}
\numberwithin{equation}{section}
\def \K {\mathcal{K}}
\def\sK{\mathcal K}
\def \Ker {\operatorname{Ker}}
\def \Z {\mathbb{Z}}
\def \C {\mathbb{C}}
\def \R {\mathbb{R}}
\def \gr {\operatorname{gr}}
\newcommand{\mb}[1]{{\textbf {\textit#1}}}
\def\Ab{\mathit{Ab}}
\def\cir{\mbox{\it Cir\/}}
\def\FL{\mathit{FL}}
\def\gr{\mathop{\mathrm{gr}}\nolimits}
\def\raag{\mbox{\it RA\/}}
\def\racg{\mbox{\it RC\/}}
\def\RCK{{\operatorname{\mathit{RC}}_\mathcal{K}}}
\def\ass{\text{\sc ass}}
\def\cat{\text{\sc c}}
\def\catK{\mathop{\text{\sc cat}}(\mathcal K)}
\def\grp{\text{\sc grp}}
\def\hpf{\text{\sc hpf}}
\def\lie{\text{\sc lie}}
\def\tgp{\text{\sc tgp}}
\def\tmn{\text{\sc tmn}}
\def\top{\text{\sc top}}
\def\Hom{\mathop\mathrm{Hom}\nolimits}
\def\colim{\mathop\mathrm{colim}\nolimits}
\def\lim{\mathop\mathrm{lim}\nolimits}
\def\hocolim{\mathop\mathrm{hocolim}\nolimits}
\def\pt{\mathit{pt}}
\def\rk{\mathcal R_\mathcal K}
\def\zk{\mathcal Z_\mathcal K}
\title{Polyhedral products, graph products and $p$-central series}
\author{Taras Panov}
\author{Temurbek Rahmatullaev}
\address{Department of Mathematics and Mechanics, Moscow
State University, Russia;\newline
Steklov Mathematical Institute of Russian Academy of Sciences, Moscow, Russia;\newline
Institute for Theoretical and Experimental Physics of National Research Centre ``Kurchatov Institute'', Moscow, Russia;\newline
National Research University Higher School of Economics, Moscow, Russia}
\email{tpanov@mech.math.msu.su}
\email{raxtemur@gmail.com}
\subjclass[2020]{16S10, 17B35, 20F40, 20F65, 55P35, 57M07, 57S12}
\keywords{Polyhedral product, graph product, right-angled Coxeter group, central series, restricted Lie algebra, loop homology}
\thanks{The work of Taras Panov (Sections~2, 3 and~7) was supported by the Russian Science Foundation under grant no.~23-11-00143, http://rscf.ru/en/project/23-11-00143/, and performed at Steklov Mathematical Institute of Russian Academy of Sciences. The work of Temurbek Rakhmatullaev (Sections 4--6) was as carried out within the project ``Mirror Laboratories'' of HSE University, Russian Federation.}
\dedicatory{To Victor Matveevich Buchstaber
on the occasion of his 80th birthday}
\begin{document}

\begin{abstract}
We relate polyhedral products of topological spaces to graph products of groups. The loop homology algebras of polyhedral products are identified with the universal enveloping algebras of the Lie algebras associated with central series of graph products. By way of application, we describe the restricted Lie algebra associated with the lower $2$-central series of a right-angled Coxeter group and identify its universal enveloping algebra with the loop homology of the Davis--Januszkiewicz space.
\end{abstract}

\maketitle

\section{Introduction} 
The polyhedral product space $(\mb X,\mb A)^\sK$ is defined for a sequence of pairs of topological spaces $(\mb X,\mb A)=((X_1,A_1),\ldots,(X_m,A_m))$ and a simplicial complex $\sK$ on a finite set $[m]=\{1,\ldots,m\}$. When each $A_i$ is a point, the polyhedral product $\mb X^\sK=(\mb X,\pt)^\sK$ interpolates between the $m$-fold wedge $X_1\vee\cdots\vee X_m$ and the $m$-fold product $X_1\times\cdots\times X_m$. The study of polyhedral products takes its origin in toric topology~\cite{bu-pa15} and has recently developed into an active area within homotopy theory~\cite{b-b-c20}. Important examples are the Davis--Januszkiewicz space $(\C P^\infty)^\sK$, the moment-angle complex $\zk=(D^2,S^1)^\sK$ and its real analogue $\mathcal R_\sK=(D^1,S^0)^\sK$. The latter is a cubic complex that features in geometric group theory.

The polyhedral product $\mb X^\sK$ can also be defined for a sequence of objects $\mb X=(X_1,\ldots,X_m)$ in an arbitrary category $\cat$ with finite products and colimits, or in a symmetric monoidal category with finite colimits. For many categories of algebraic objects, like groups $\grp$ or associative algebras $\ass$, the polyhedral product $\mb X^\sK$ depends only on the one-skeleton (graph) $\sK^1=\Gamma$ and is referred to as the \emph{graph product} (see Proposition~\ref{fatiso}). This reflects the fact that a set of pairwise commuting elements of a group (algebra) generates a commutative subgroup (subalgebra). On the contrary, the polyhedral product of topological spaces is not determined by the one-skeleton~$\sK^1$. For example, when $\sK=\partial\Delta_{[m]}$, the boundary of simplex, the polyhedral product $\mb X^{\partial\Delta_{[m]}}$ is known in homotopy theory as the \emph{fat wedge} of $(X_1,\ldots,X_m)$, and it is different from the $m$-fold product $\mb X^{\Delta_{[m]}}=X_1\times\cdots\times X_m$.

Graph products are familiar objects in combinatorial and geometric group theory. Well-known examples are the \emph{right-angled Artin group}
\[
  \raag_\sK=F(a_1,\ldots,a_m)\big/ 
  \bigl(a_ia_j=a_ja_i \text{ for }\{i,j\}\in\sK\bigr),
\]
which is the graph product $\Z^\sK$,
and the \emph{right-angled Coxeter group}
\[
  \racg_\sK=F(a_1,\ldots,a_m)\big/ \bigl(a_i^2=1,\;a_ia_j=a_ja_i
  \text{ for }\{i,j\}\in\sK\bigr),
\]
which is the graph product $\Z_2^\sK$. (We denote the group of integers by $\Z$ and the group/field of residues modulo prime $p$ by~$\Z_p$.)

A number of results in homotopy theory of polyhedral products and geometric group theory can be interpreted as the property of some functor to preserve graph products. These include the constructions of the classifying spaces for right-angled Artin and Coxeter groups, the description of their cohomology, and the description of the loop homology algebras of polyhedral products. We give a more detailed account in Section~\ref{secpg}. Group-theoretic and homotopy-theoretic results of this sort usually come in pairs, with similar formulations but rather different proofs. For example, there are very similar descriptions of the commutator subgroup $\racg'_\sK$ of a right-angled Coxeter group $\racg_\sK$~\cite{pa-ve16} and the commutator subalgebra $H_*(\varOmega\zk)$ of the Pontryagin algebra (loop homology) $H_*(\varOmega(\C P^\infty)^\sK)$~\cite{g-p-t-w16}. Furthermore, when $\sK$ is a flag complex, the commutator subgroup $\racg'_\sK$ and the commutator subalgebra $H_*(\varOmega\zk)$ are free if and only if the one-skeleton $\sK^1$ is a chordal graph. There is also a similarity in the criteria for $\racg'_\sK$ and $H_*(\varOmega\zk)$ to be a one-relator group and algebra, respectively~\cite{g-i-p-s22}. Graph products can also be defined for simplicial groups~\cite{cai}, and their classifying spaces are related to polyhedral products in the simplicial homotopy-theoretical setting.

In this paper we give evidence that group-theoretical results on graph products and homotopy-theoretical results on polyhedral products can be linked to each other via central series of groups and their associated graded Lie algebras. This manifests itself most directly in the case of right-angled Artin groups~$\raag_\sK$. Namely, the Lie algebra associated with the lower central series $\gamma(\raag_\sK)=\{\raag_\sK=\gamma_1(\raag_\sK)\supset\gamma_2(\raag_\sK)
\supset\ldots\}$ is the ``partially commutative'' quotient of a free Lie algebra:
\[
  \gr_\bullet\gamma(\raag_\sK)\cong
  \FL(v_1,\ldots,v_m)\big/
  \bigl([v_i,v_j]=0\;
  \text{for }\{i,j\}\in\sK\bigr).
\]
This extends the classical result of Magnus on the lower central series of a free group and is proved by a similar method based on the Magnus map~\cite{du-kr92, pa-su06, wade15, b-h-s20}. The universal enveloping algebra of $\gr_\bullet\gamma(\raag_\sK)$ is the partially commutative associative algebra. The latter is isomorphic to the loop homology of the polyhedral product of spheres $(S^{2p+1})^\sK$ by the result of~\cite{bu-go11}:
\[
  \mathcal U(\gr_\bullet\gamma(\raag_\sK))\cong
  H_*(\varOmega(S^{2p+1})^\sK;\Z).
\]

The lower central series of a right-angled Coxeter group $\racg_\sK$ is more subtle. The associated Lie algebra $\gr_\bullet\gamma(\racg_\sK)$ is a Lie algebra over~$\Z_2$ that is not finitely presented~\cite{vere22,ve-ra24}. It cannot be readily expressed as a graph product, and there is no explicit connection to loop homology of polyhedral products. \emph{Restricted} Lie algebras associated with \emph{$p$-central} series better serve this purpose.

The \emph{lower $p$-central series} of a group $G$ is the fastest descending central series 
$\{G=
\gamma^{[p]}_1(G)\supset\gamma^{[p]}_2(G)\supset\cdots\}$
with the property that $\bigl(\gamma^{[p]}_k(G)\bigr)^p
\subset \gamma^{[p]}_{kp}(G)$ for $k=1,2,\ldots$. The associated graded abelian group
\[
  \gr_\bullet\gamma^{[p]}(G)
  =\bigoplus_{k\ge1}
  \gamma^{[p]}_k(G)/\gamma^{[p]}_{k+1}(G)
\]
is a \emph{restricted Lie algebra} or \emph{$p$-Lie algebra}: it is a $\Z_p$-module with a Lie bracket and a $p$-power operation $x\mapsto x^{[p]}$. The details are given in Section~\ref{secres}.

In  Theorem~\ref{2lieracg} we give the following explicit presentation for the $2$-Lie algebra associated with the lower $2$-central series of $\racg_\sK$:
\[
  \gr_\bullet\gamma^{[2]}(\racg_\sK)\cong
  \FL_2(u_1,\ldots,u_m)\big/\bigl(u_i^{[2]}=0,\; 
  [u_i,u_j]=0\text{ for }\{i,j\}\in\sK\bigr).
\]
The right hand side is the graph product of trivial $2$-Lie algebras $\Z_2\langle u\rangle=\FL_2(u)/(u^{[2]}=0)$ with one generator. There is also a generalisation to graph products of elementary abelian $p$-groups for an arbitrary prime~$p$ (Theorem~\ref{rlazpk}). The proof uses the result of Quillen~\cite{quil68} relating the lower $p$-central series of a group to the filtration of its group algebra by the powers of the augmentation ideal alongside with the construction of an analogue of the Magnus map. The universal enveloping algebra $\mathcal U_2(\gr_\bullet\gamma^{[2]}(\racg_\sK))$ is identified with the loop homology of $(\C P^\infty)^\sK$ with $\Z_2$-coefficients (Proposition~\ref{ueracg}).
As a consequence, we obtain the following relation between the fundamental group of the polyhedral product of $\R P^\infty$ and the loop homology of the polyhedral product of $\C P^\infty$ for flag complexes~$\sK$:
\[
  \mathcal U_2\bigl(\gr_\bullet\gamma^{[2]}\pi_1(\R P^\infty)^\sK\bigr)\cong
  H_*\bigl(\varOmega(\C P^\infty)^\sK;\Z_2\bigl).
\]
Recall that $\pi_1((\R P^\infty)^\sK)=(\pi_1(\R P^\infty))^\sK=\racg_\sK$.

In the Appendix we give a comparative table of group-theoretical and homotopy-theoretical results on polyhedral products and graph products.

The authors are grateful to the anonymous referee for very helpful comments and suggestions.

\section{Polyhedral and graph products}\label{secpg}
Let $\K$ be a simplicial complex on a finite ordered 
set $[m]=\{1,\dots,m\}$. We
assume that the empty set $\varnothing$ and all one-element subsets $\{i\}\subset[m]$ belong to~$\sK$. We refer to $I=\{i_1,\ldots,i_k\}\in\mathcal K$ as a \emph{simplex} or  \emph{face} of~$\K$.
The \emph{face category}
$\catK$ has simplices $I\in\sK$ as objects and inclusions $I\subset J$ as morphisms.

\begin{construction}[polyhedral product]\label{polpr}
Let
\[
  (\mb X,\mb A)=\{(X_1,A_1),\ldots,(X_m,A_m)\}
\]
be a sequence of $m$ pairs of pointed topological spaces, $\pt\in
A_i\subset X_i$, where $\pt$ denotes the basepoint. For each subset $I\subset[m]$, we set
%\begin{equation}\label{XAI}
\[ 
  (\mb X,\mb A)^I=\bigl\{(x_1,\ldots,x_m)\in
  \prod_{k=1}^m X_k\colon\; x_k\in A_k\;\text{for }k\notin I\bigl\}
\]
%\end{equation}

Let $\top$ denote the category of pointed
topological spaces. Consider the $\catK$-diagram
%\begin{equation}\label{diagK}
\[
\begin{aligned}
  \mathcal T_\sK(\mb X,\mb A)\colon \catK&\longrightarrow \top,\\
  I&\longmapsto (\mb X,\mb A)^I,
\end{aligned}
\]
%\end{equation}
which maps the morphism $I\subset J$ of $\catK$ to the
inclusion of spaces $(\mb X,\mb A)^I\subset(\mb X,\mb A)^J$. 

The \emph{polyhedral product} of $(\mb X,\mb A)$
corresponding to $\sK$ is given by
%\begin{equation}\label{ppcolim}
\[
(\mb X,\mb A)^\sK=
  \mathop{\mathrm{colim}}\nolimits^\top
  \mathcal T_\sK(\mb X,\mb A)=\mathop{\mathrm{colim}}_{I\in\sK}\nolimits^\top
  (\mb X,\mb A)^I.
\]
%\end{equation}
Here $\colim$ denotes the \emph{colimit functor} from the category of
$\catK$-diagrams of topological spaces to the
category~$\top$. By definition, $\colim$ is the left adjoint to the constant diagram functor.
Explicitly,
\[
  (\mb X,\mb A)^{\sK}=\bigcup_{I\in\mathcal K}(\mb X,\mb A)^I=
  \bigcup_{I\in\mathcal K}
  \Bigl(\prod_{i\in I}X_i\times\prod_{i\notin I}A_i\Bigl)\subset\prod_{i=1}^m X_i.
\]
When each $A_i$ is a point,
we use the abbreviated notation $\mb X^\sK$ for $(\mb X,\pt)^\sK$.

As a functor, the polyhedral product $\mb X^\sK$ can be defined in a symmetric monoidal category~$\cat$ with finite colimits. Consider a sequence of objects $\mb X=(X_1,\ldots,X_m)$ in~$\cat$, set $\mb X^I=\prod_{i\in I}X_i$, where the product is taken with respect to the monoidal structure, and consider the diagram
\[
\begin{aligned}
  \mathcal C_\sK(\mb X)\colon
  \catK&\longrightarrow\cat,\quad&
  I\longmapsto \mb X^I,
\end{aligned}
\]
taking a morphism $I\subset J$ to the canonical morphism $\mb X^I\to\mb X^J$. The latter is defined using the unit object in~$\cat$. Then define the polyhedral product
\begin{equation}\label{mcprod}
  \mb X^{\sK}=\colim^\cat\mathcal C_\sK(\mb
  X)=\colim^\cat_{I\in\sK}\mb X^I.
\end{equation}
\end{construction}

The \emph{full subcomlex} of $\sK$ on a subset $I\subset[m]$ is  $\sK_I=\{J\in\sK\colon J\subset I\}$.

A \emph{missing face} (or a \emph{minimal non-face}) of $\sK$ is a subset $I\subset[m]$ such that $I$ is not a simplex of~$\sK$, but every proper subset of $I$ is a simplex of~$\sK$. Each missing face corresponds to a full subcomplex $\partial\Delta_I\subset\sK$, where $\partial\Delta_I$ denotes the boundary of the simplex on the vertex set~$I$.

A simplicial complex $\sK$ is called a \emph{flag complex} if each of its missing faces consists of two vertices. Equivalently, $\sK$ is flag if any set of vertices of $\sK$ which are pairwise connected by edges spans a face. 

Complete subgraphs in a simple graph $\Gamma$ form a flag simplicial complex $\sK(\Gamma)$, called the \emph{clique complex} of~$\Gamma$.
Every flag complex $\sK$ is the clique complex of its 1-skeleton~$\sK^1$.

The polyhedral product $\mb X^{\partial\Delta_{[m]}}$ of pointed topological spaces $X_1,\ldots,X_m$ is known as their \emph{fat wedge}. When $m=2$, it is the wedge $X_1\vee X_2$.

\begin{definition}
A symmetric monoidal category $\cat$ with finite colimits is said to be \emph{fat} if the canonical morphism 
\[
  \mb X^{\partial\Delta_{[m]}}\to\prod_{i=1}^m X_i
\]
is an isomorphism for any objects $X_1,\ldots,X_m$ and $m\ge3$.
\end{definition}

The following fat categories will be used throughout the paper:
\begin{itemize}
\item[$\grp$:] groups (the monoidal product is cartesian, the coproduct is the free product);

\item[$\tmn$:] topological monoids (the monoidal product is cartesian, the coproduct is the free product);

\item[$\tgp$:] topological groups (a full subcategory of $\tmn$);

\item[$\ass$:] associative algebras with unit over a commutative ring~$R$ (the monoidal product is the tensor product, the coproduct is the free product);

\item[$\lie$:] Lie algebras over~$R$ (the monoidal product is the direct sum, the coproduct is the free product).
\end{itemize}
In each of these categories, a set of pairwise commuting elements of a group (algebra) generates a commutative subgroup (subalgebra). On the other hand, the category $\top$ is clearly not fat.

\begin{proposition}\label{fatiso}
In a fat category $\cat$, the morphism $\mb X^{\sK^1}\to\mb X^\sK$ is an isomorphism for any simplicial complex~$\sK$ and sequence of objects $\mb X=(X_1,\ldots,X_m)$ .
\end{proposition}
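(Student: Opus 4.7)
The plan is to induct on the simplices of cardinality at least~$3$ that must be attached to $\sK^1$ in order to obtain $\sK$. Enumerate the simplices $I\in\sK$ with $|I|\ge 3$ as $I_1,\ldots,I_N$ in some order of non-decreasing cardinality, set $\sK_0=\sK^1$, and define $\sK_k=\sK_{k-1}\cup\{I_k\}$, so that $\sK_N=\sK$. Every proper face $J\subsetneq I_k$ satisfies $|J|<|I_k|$; if $|J|\le 2$ then $J\in\sK^1\subset\sK_{k-1}$, and otherwise $J=I_j$ for some $j<k$, which again lies in $\sK_{k-1}$. Consequently $\partial\Delta_{I_k}\subset\sK_{k-1}$ and $\sK_k$ is the pushout
\[
\sK_k\;=\;\sK_{k-1}\cup_{\partial\Delta_{I_k}}\Delta_{I_k}
\]
in the category of simplicial complexes, and the induced square of face categories is a pushout of small categories.

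Applying the diagram $\mathcal C_{\bullet}(\mb X)$ and the colimit functor from~\eqref{mcprod}, and using that colimits commute with colimits, one obtains a pushout square in~$\cat$:
\[
\begin{tikzcd}
\mb X^{\partial\Delta_{I_k}}\ar[r]\ar[d] & \mb X^{I_k}\ar[d]\\
\mb X^{\sK_{k-1}}\ar[r] & \mb X^{\sK_k}
\end{tikzcd}
\]
where the upper right corner is computed as $\mb X^{\Delta_{I_k}}=\prod_{i\in I_k}X_i=\mb X^{I_k}$ directly from~\eqref{mcprod}. Since $|I_k|\ge 3$ and $\cat$ is fat, the top horizontal morphism is an isomorphism. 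In a pushout square, the morphism parallel to an isomorphism is itself an isomorphism, so $\mb X^{\sK_{k-1}}\to\mb X^{\sK_k}$ is an isomorphism for every~$k$. Composing these over $k=1,\ldots,N$ yields the desired isomorphism $\mb X^{\sK^1}\to\mb X^\sK$, because the canonical morphism is by construction the composition induced by the chain of inclusions $\sK^1=\sK_0\subset\sK_1\subset\cdots\subset\sK_N=\sK$.

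The only step requiring genuine care is justifying that the polyhedral product functor sends the combinatorial attaching pushout of simplicial complexes to a pushout in~$\cat$. This reduces to the standard categorical identity $\colim_{A\cup_C B}F\cong\colim_A F\sqcup_{\colim_C F}\colim_B F$ for a functor $F$ on a poset expressed as the union of two subposets along a common subposet, applied to the face category of $\sK_k$ written as the union of the face categories of $\sK_{k-1}$ and $\Delta_{I_k}$ along the face category of~$\partial\Delta_{I_k}$. Everything else is formal once the fat hypothesis is invoked at each attaching step.
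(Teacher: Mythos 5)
Your proof is correct and follows essentially the same route as the paper: both induct on attaching the simplices of cardinality at least $3$ one at a time, use the pushout square $\sK_k=\sK_{k-1}\cup_{\partial\Delta_{I_k}}\Delta_{I_k}$ and its image under the polyhedral product, and invoke the fat hypothesis to conclude that the parallel morphism in the pushout is an isomorphism. Your extra care in ordering the simplices by non-decreasing cardinality and in justifying that the combinatorial pushout is sent to a pushout in $\cat$ only makes explicit what the paper leaves implicit.
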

\begin{proof}
We add simplices $I$ with $|I|\ge3$ to $\sK^1$ consecutively until we end up at~$\sK$, and use induction on the number of added simplices. The base of induction is $\sK=\sK^1$ and is clear.
Suppose $\sK=\sK'\cup I$, where $|I|\ge3$, and $\mb X^{\sK^1}\to\mb X^{\sK'}$ is an isomorphism by induction assumption. We have a pushout square of simplicial complexes and the corresponding pushout of polyhedral products:
\[
\begin{tikzcd}
  \partial\Delta_I \ar{r}\ar{d} & \Delta_I \ar{d}\\ 
  \sK' \ar{r} & \sK,
\end{tikzcd}
\qquad
\begin{tikzcd}
  \mb X^{\partial\Delta_I} \ar{r}\ar{d} & 
  \mb X^{\Delta_I} \ar{d}\\ 
  \mb X^{\sK'} \ar{r} & \mb X^{\sK}.
\end{tikzcd}
\]
Since $\mb X^{\partial\Delta_I}\to\mb X^{\Delta_I}$ is an isomorphism, $\mb X^{\sK'} \to \mb X^{\sK}$ is also an isomorphism, and so is $\mb X^{\sK^1}\to\mb X^\sK$.
\end{proof}

\begin{definition}
For a sequence $\mb X=(X_1,\ldots,X_m)$ of objects in $\cat$ and a finite simple graph $\Gamma$ on $m$ vertices, the \emph{graph product} $\mb X^\Gamma$ is defined as the polyhedral product $\mb X^{\sK(\Gamma)}$.
\end{definition}

By Proposition~\ref{fatiso}, if $\cat$ is fat, then $\mb X^\Gamma$ is canonically isomorphic to $\mb X^\sK$ for any $\sK$ with $\sK^1=\Gamma$.

\begin{example}[graph product of groups]\label{cgrpr}
Let $\mb
G=(G_1,\ldots,G_m)$ be a sequence of $m$ (topological) groups. 
%For each $I\subset[m]$, we set
%\[
%  \mb G^I=\bigl\{(g_1,\ldots,g_m)\in
%  \prod_{k=1}^m G_k\colon\; g_k=1\quad\text{for }k\notin I\bigl\}
%\]
%and define a $\catK$-diagram of topological groups:
%\[
%\begin{aligned}
%  \mathcal G_\sK(\mb G)\colon
%  \catK&\longrightarrow\tgp,\quad&
%  I\longmapsto \mb G^I,
%\end{aligned}
%\]
%which maps a morphism $I\subset J$ to the canonical monomorphism $\mb G^I\to\mb G^J$. 
%
Since the category $\tgp$ is fat, we have 
\[
  \mb G^{\sK^1}=\mb G^{\sK}
  %=\colim^\tgp\mathcal G_\sK(\mb G)
  =\colim^\tgp_{I\in\sK}\mb G^I.
\]
Explicitly,
\[
  \mb G^{\sK}=\mathop{\mbox{\Huge$\star$}}_{k=1}^m G_k\big/(g_ig_j=g_jg_i\,\text{ for
}g_i\in G_i,\,g_j\in G_j,\,\{i,j\}\in\sK),
\]
where $\mathop{\mbox{\Huge$\star$}}_{k=1}^m G_k$ denotes the free product of the groups~$G_k$.
There are canonical injective homomorphisms $\mb G^I\to\mb G^\sK$ for $I\in\sK$.
\end{example}

There will be several occasions throughout the paper when a functor between two categories preserves graph products. This is certainly the case when the functor preserves finite products and colimits. Other examples include the classifying space functor and the loop homology functor, considered below.

Given a topological group $G$, consider the universal $G$-bundle $EG\to BG$, where $EG$ is the universal $G$-space and $BG$ the classifying space for~$G$.

Following the notation from Example~\ref{cgrpr}, the
classifying space $B\mb G^I$ is the product of $BG_i$ over $i\in
I$. We therefore have the polyhedral product $(B\mb G)^\sK$
corresponding to the sequence of pairs $(B\mb
G,\pt)=\{(BG_1,\pt),\ldots,(BG_m,\pt)\}$.
Similarly, we have the polyhedral product $(E\mb G,\mb G)^\sK$
corresponding to the sequence of pairs $(E\mb G,\mb
G)=\{(EG_1,G_1),\ldots,(EG_m,G_m)\}$. Here each $G_i$ is included
in $EG_i$ as the fibre of $EG_i\to BG_i$ over the
basepoint.

\begin{proposition}[{\cite[Proposition~5.1]{p-r-v04},
\cite[4.3.10]{bu-pa15}}]\label{ragfib}
There is a homotopy fibration of polyhedral products
\begin{equation}\label{hofib}
  (E\mb G,\mb G)^\sK\longrightarrow (B\mb
  G)^\sK\longrightarrow\prod_{i=1}^m BG_i.
\end{equation}
\end{proposition}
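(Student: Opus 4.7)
The plan is to realise the displayed sequence as the homotopy fibration obtained by pulling back a contractible‐total‐space fibration over the ambient product.

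First, I would consider the product bundle
\[
  \prod_{i=1}^m EG_i\;\longrightarrow\;\prod_{i=1}^m BG_i
\]
with fibre $\prod_i G_i$. It is a (Hurewicz) fibration, and its total space is contractible because each $EG_i$ is contractible. The inclusion $\sK\subset\Delta_{[m]}$ induces a canonical map $(B\mb G)^\sK\hookrightarrow(B\mb G)^{\Delta_{[m]}}=\prod_i BG_i$, and I propose to pull back the product bundle along this map.

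Next, I would identify the strict pullback $(B\mb G)^\sK\times_{\prod BG_i}\prod EG_i$ with the polyhedral product $(E\mb G,\mb G)^\sK$. This is the main geometric step, and I would carry it out stratum by stratum: for $I\in\sK$, the preimage of $(B\mb G)^I=\prod_{i\in I}BG_i\times\prod_{i\notin I}\{\pt\}$ under the product projection $\prod EG_i\to\prod BG_i$ is exactly
\[
  \prod_{i\in I}EG_i\times\prod_{i\notin I}G_i\;=\;(E\mb G,\mb G)^I,
\]
since $G_i\subset EG_i$ is the fibre over the basepoint. Taking the union over $I\in\sK$ and using that colimits commute with pullback along a fibration (or simply that the polyhedral product is the union of the closed strata $(E\mb G,\mb G)^I$ inside $\prod EG_i$) identifies the pullback with $(E\mb G,\mb G)^\sK$. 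The composite map $(E\mb G,\mb G)^\sK\to(B\mb G)^\sK\to\prod_i BG_i$ is then tautologically the natural projection.

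Finally, because the total space $\prod_i EG_i$ is contractible and $\prod EG_i\to\prod BG_i$ is a fibration, this strict pullback computes the homotopy fibre of $(B\mb G)^\sK\to\prod_i BG_i$: indeed, replacing $\prod EG_i$ by a point in the pullback diagram is a weak equivalence of fibrations, and the resulting homotopy pullback is the homotopy fibre by definition. This yields the desired homotopy fibration~\eqref{hofib}. The only place that requires some care is the stratum‐by‐stratum identification of the pullback with the polyhedral product, which relies on the fact that the inclusions $(B\mb G)^I\subset(B\mb G)^\sK$ form a pushout (colimit) diagram compatible with the bundle projection; everything else is formal.
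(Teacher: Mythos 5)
Your argument is correct: identifying the strict pullback of $\prod_i EG_i\to\prod_i BG_i$ along $(B\mb G)^\sK\hookrightarrow\prod_i BG_i$ with $(E\mb G,\mb G)^\sK$ stratum by stratum, and then using contractibility of $\prod_i EG_i$ to recognise this pullback as the homotopy fibre, is exactly the standard proof. The paper itself states the proposition without proof, citing \cite[Proposition~5.1]{p-r-v04} and \cite[4.3.10]{bu-pa15}, and the argument given in those references is essentially the one you describe.
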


The classifying space functor
\[
  B\colon\tgp\to\top
\]
does not preserve colimits over small categories in general, but it does preserve \emph{homotopy colimits} under appropriate model structures on $\top$ and $\tgp$, see~\cite[Theorem~7.12]{p-r-v04}. The following specification describes the relationship between polyhedral products of spaces and graph products of groups.

\begin{theorem}[{\cite[Proposition~6.1, Theorem~7.17]{p-r-v04}}]
There is a commutative square of natural maps
\[
\begin{tikzcd}
  \hocolim^\top_{I\in\sK} B\mb G^I \ar{r}{\simeq} 
  \ar{d}{\simeq}&
  B\hocolim^\tgp_{I\in\sK} \mb G^I \ar{d}\\
  \colim^\top_{I\in\sK} B\mb G^I \ar{r} &
  B\colim^\tgp_{I\in\sK}\mb G^I,
\end{tikzcd}
\]
in $\top$, where the top and left maps are homotopy equivalences, and the right map is a homotopy equivalence when $\sK$ is a flag complex.
\end{theorem}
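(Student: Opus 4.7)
Commutativity of the square is formal and follows from the naturality of both the comparison $\hocolim\to\colim$ and the canonical map from the (homotopy) colimit of $B\circ\mathcal C_\sK(\mb G)$ to $B$ applied to the (homotopy) colimit of $\mathcal C_\sK(\mb G)$. Hence it suffices to verify that the three marked maps are weak equivalences; the fourth is then forced by two-out-of-three.

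\emph{Top map.} The claim is that $B\colon\tgp\to\top$ preserves homotopy colimits over small diagrams, as asserted in [p-r-v04, Theorem~7.12]. I would prove it by modelling both homotopy colimits as two-sided bar constructions over the face category $\catK$. Since $B$ commutes with geometric realisation and with the finite products defining $\mb G^I$ (on well-pointed topological groups), the induced map is a level-wise equivalence of simplicial spaces and hence a weak equivalence after realisation.

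\emph{Left map.} Each $BG_i$ is well-pointed, so every structure inclusion $B\mb G^I\hookrightarrow B\mb G^J$ is a closed cofibration between products of well-pointed CW-type spaces. For polyhedral product diagrams built from such NDR pairs, the canonical map from the homotopy colimit to the strict colimit is a weak equivalence, and this standard cofibrancy statement applies directly here.

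\emph{Right map.} This step is the main obstacle and is where the flag hypothesis enters. By the previous three cases, its being an equivalence is equivalent to the bottom map $(B\mb G)^\sK\to B\mb G^\sK$ being a homotopy equivalence for flag~$\sK$. That flagness is needed is already visible at $\sK=\partial\Delta_{[m]}$ for $m\ge3$: by fatness of $\tgp$, $\mb G^{\partial\Delta_{[m]}}=G_1\times\cdots\times G_m$, so the right-hand side is the full product of classifying spaces while the left-hand side is its proper fat-wedge subspace. For the flag case I would induct on the number of vertices via the standard decomposition $\sK=(\sK\setminus v)\cup\mathrm{star}(v,\sK)$ along the full subcomplex $\mathrm{link}(v,\sK)$. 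Flagness is inherited by full subcomplexes and links, so the inductive hypothesis applies to each piece; the star of $v$ is a cone, so its graph product splits as a direct product with $G_v$ and is carried by $B$ to the Cartesian product of classifying spaces; and the induced pushouts of topological groups and of topological spaces match up, giving the equivalence.
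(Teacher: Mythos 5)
The paper gives no proof of this statement: it is quoted verbatim from \cite[Proposition~6.1, Theorem~7.17]{p-r-v04}, where it is established with model-category structures on $\tgp$ and $\tmn$. Judged on its own, your architecture is reasonable (commutativity is formal; top and left arrows are equivalences; the right arrow is then equivalent to the bottom one by two-out-of-three; prove the bottom arrow for flag $\sK$ by vertex induction), and your treatment of the left map via the projection lemma for diagrams of cofibrations is fine. The problem is that the inductive step for the bottom map carries the entire homotopical content of the theorem, and you assert it rather than prove it. Concretely: in the decomposition $\sK=\sK_{[m-1]}\cup_{\mathop{\mathrm{lk}}\nolimits_\sK\{m\}}\mathop{\mathrm{st}}\nolimits_\sK\{m\}$, the space-level pushout $(B\mb G)^\sK$ is a homotopy pushout because the structure maps are cofibrations; to close the induction you therefore need $B$ to carry the group-level pushout $\mb G^{\sK}=\mb G^{\sK_{[m-1]}}\star_{\mb G^{\mathop{\mathrm{lk}}\nolimits_\sK\{m\}}}\mb G^{\mathop{\mathrm{st}}\nolimits_\sK\{m\}}$ to a homotopy pushout of classifying spaces. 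For discrete groups this can be supplied by Bass--Serre theory or Whitehead's asphericity argument (this is exactly how Proposition~\ref{basph} is proved), but the theorem is stated for arbitrary topological groups --- the application the paper needs is $G_i=T^1$, yielding the Davis--Januszkiewicz space and the circulation group --- and there asphericity is unavailable. One must show directly that these particular amalgamated free products in $\tgp$, taken along inclusions of retracts and direct factors, are homotopy pushouts; that is precisely what the cofibrancy analysis of \cite{p-r-v04} provides. ``The induced pushouts of topological groups and of topological spaces match up'' is the statement to be proved, not an observation.

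A secondary inaccuracy: in your argument for the top map, the levels of the bar construction computing $\hocolim^\tgp_{I\in\sK}\mb G^I$ are coproducts in $\tgp$ indexed by chains in $\catK$, i.e.\ free products, not products. So the levelwise comparison you need is $B(G\star H)\simeq BG\vee BH$ for well-pointed topological groups, not compatibility of $B$ with the finite products defining $\mb G^I$. The required fact is true, but it is not the one you invoked, and it is again a (small) instance of the very preservation statement being proved.
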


\begin{corollary}\label{bcommut}
The natural map 
\[
  (B\mb G)^\sK\to B(\mb G^{\sK})
\]
is a homotopy equivalence when $\sK$ is a flag complex. That is, the classifying space functor preserves graph products up to homotopy equivalence. 
\end{corollary}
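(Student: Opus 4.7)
The plan is to read the corollary directly off the commutative square in the preceding theorem. By Construction~\ref{polpr}, the polyhedral product in $\top$ is by definition a colimit, so
\[
  (B\mb G)^\sK = \colim^\top_{I\in\sK} B\mb G^I,
\]
which is the bottom-left vertex of the square. Similarly, $\mb G^\sK$ is by definition the colimit $\colim^\tgp_{I\in\sK}\mb G^I$ in the fat category $\tgp$, so
\[
  B(\mb G^\sK) = B\colim^\tgp_{I\in\sK}\mb G^I,
\]
which is the bottom-right vertex. The natural map in the statement of the corollary is then precisely the bottom horizontal map of the square.

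The first step is to observe that the theorem asserts that (i) the top map is a homotopy equivalence, (ii) the left map is a homotopy equivalence, and (iii) the right map is a homotopy equivalence under the flagness hypothesis on~$\sK$. The second step is to invoke commutativity of the square together with the two-out-of-three property for homotopy equivalences: in a commuting square three of whose sides are homotopy equivalences, the remaining side is also a homotopy equivalence. Applied here, this forces the bottom map $(B\mb G)^\sK \to B(\mb G^\sK)$ to be a homotopy equivalence, which is the claim.

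There is really no obstacle in this argument; all the work has already been done in the theorem. The only place where one might stumble is in identifying the two bottom vertices of the square with $(B\mb G)^\sK$ and $B(\mb G^\sK)$ as set up in Example~\ref{cgrpr} and Construction~\ref{polpr}, but this is immediate from the definitions of the polyhedral product as a colimit in the respective categories. The flagness hypothesis enters only through its role in the theorem, where it is needed for the strict colimit of topological groups $\colim^\tgp_{I\in\sK}\mb G^I$ to compute the homotopy colimit, after which the classifying space functor $B$ (which preserves homotopy colimits) can be pulled through.
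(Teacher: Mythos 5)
Your argument is correct and is exactly the intended one: the paper states the corollary as an immediate consequence of the preceding commutative square, with the bottom map identified with $(B\mb G)^\sK\to B(\mb G^\sK)$ and the two-out-of-three property applied to the three sides known to be homotopy equivalences. No further comment is needed.
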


\begin{proposition}\label{basph}
If $\sK$ is flag and $\mb G=(G_1,\ldots,G_m)$ consists of discrete groups, then the polyhedral product $(B\mb G)^\sK$ is an aspherical space with $\pi_1((B\mb G)^\sK)\cong\mb G^{\sK}$. 
\end{proposition}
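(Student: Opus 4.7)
The plan is to deduce the statement from Corollary~\ref{bcommut} together with the standard identification of the classifying space of a discrete group as an Eilenberg--MacLane space. There is essentially one substantive input (Corollary~\ref{bcommut}) and one elementary observation (the graph product of discrete groups is discrete); the rest is bookkeeping.

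First I would invoke Corollary~\ref{bcommut}: since $\sK$ is flag, the natural map gives a homotopy equivalence
\[
(B\mb G)^\sK \;\simeq\; B(\mb G^\sK),
\]
so it suffices to show that $B(\mb G^\sK)$ is aspherical with fundamental group $\mb G^\sK$.

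Next I would check that $\mb G^\sK$ is again a discrete group. Although the graph product is formed as a colimit in $\tgp$, the discrete-topology functor $\grp\to\tgp$ is left adjoint to the forgetful functor and therefore preserves colimits; equivalently, the explicit presentation recorded in Example~\ref{cgrpr} already shows that $\mb G^\sK$ is discrete whenever each $G_i$ is. With $\mb G^\sK$ discrete, the classifying space $B(\mb G^\sK)$ is a $K(\mb G^\sK,1)$: the total space $E(\mb G^\sK)$ of the universal principal bundle is contractible, and the quotient by the free $\mb G^\sK$-action has fundamental group $\mb G^\sK$ and vanishing higher homotopy groups.

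The only place where the flag hypothesis is really used is Corollary~\ref{bcommut}; without it one would only have a homotopy equivalence with $B\hocolim^\tgp_{I\in\sK}\mb G^I$, and the fundamental group would no longer be the discrete graph product. So the main conceptual step has already been absorbed into Corollary~\ref{bcommut}, and beyond that the argument is just the standard $K(\pi,1)$ observation, with no combinatorial or homotopy-theoretic obstacle to overcome.
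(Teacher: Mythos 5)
Your proposal is correct and coincides with the paper's primary argument, which is literally ``Follows from Corollary~\ref{bcommut}''; your elaboration of why $B(\mb G^\sK)$ is a $K(\mb G^\sK,1)$ for the discrete graph product is exactly the intended bookkeeping. The paper additionally records a second, self-contained proof via Whitehead's asphericity gluing theorem and induction on the number of vertices, but that is presented only as an alternative.
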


\begin{proof}
Follows from Corollary~\ref{bcommut}. Alternatively, this can be proved using the standard Whitehead technique of gluing aspherical spaces. The argument below was outlined in~\cite[Theorem~10]{ki-ro80} in the case of right-angled Artin groups. 

By Whitehead's theorem~\cite[Theorem~5]{whit39}, if $P=P_1\cup P_2$ is a CW-complex, $P_1$ and $P_2$ are CW-subcomlexes of~$P$ such that (a) $P_1$, $P_2$ and $P_1\cap P_2$ are aspherical and (b) both homomorphisms $\pi_1(P_1\cap P_2)\to\pi_1(P_1)$ and $\pi_1(P_1\cap P_2)\to\pi_1(P_2)$ are injective, then $P$ is also aspherical. We use induction on the number of vertices~$m$ of~$\sK$. The base of induction is $m=1$ and is clear. Consider the subcomplexes 
$\mathop\mathrm{st}_{\sK}\{m\}=\{I\colon I\cup\{m\}\in\sK\}$ (the star of the $m$th vertex of~$\sK$) and $\sK_{[m-1]}$ (the full subcomplex on the first $m-1$ vertices). Their intersection is the 
the link of the $m$th vertex: 
\[
  \mathop\mathrm{st}\nolimits_{\sK}\{m\}\cap \sK_{[m-1]}=
  \mathop\mathrm{lk}\nolimits_{\sK}\{m\}=\{J\colon J\not\ni m,\,J\cup\{m\}\in\sK\}.
\]  
We have a pushout square of simplicial complexes and the corresponding pushout of polyhedral products:
\[
\begin{tikzcd}
  \mathop\mathrm{lk}\nolimits_{\sK}\{m\} \ar{r}\ar{d} & 
  \sK_{[m-1]} \ar{d}\\ 
  \mathop\mathrm{st}\nolimits_{\sK}\{m\} \ar{r} & \sK,
\end{tikzcd}
\qquad
\begin{tikzcd}
  (B\mb G)^{\mathop\mathrm{lk}\nolimits_{\sK}\{m\}} \ar{r}\ar{d} & 
  (B\mb G)^{\sK_{[m-1]}} \ar{d}\\ 
  (B\mb G)^{\mathop\mathrm{st}\nolimits_{\sK}\{m\}} \ar{r} & 
  (B\mb G)^{\sK}.
\end{tikzcd}
\]
We set $P_1=(B\mb G)^{\sK_{[m-1]}}$ and $P_2=(B\mb G)^{\mathop\mathrm{st}\nolimits_{\sK}\{m\}}$ in Whitehead's theorem. Note that $(B\mb G)^{\mathop\mathrm{st}\nolimits_{\sK}\{m\}}=(B\mb G)^{\mathop\mathrm{lk}\nolimits_{\sK}\{m\}}\times B G_m$ by the definition of polyhedral product. Since full subcomplexes and links in a flag complex are flag, the polyhedral products $P_1$,
$P_1\cap P_2=(B\mb G)^{\mathop\mathrm{lk}\nolimits_{\sK}\{m\}}$ and $P_2=(P_1\cap P_2)\times B G_m$ are aspherical by induction assumption. The homomorphism $\pi_1(P_1\cap P_2)\to\pi_1(P_2)$ is clearly injective. Furthermore, $\mathop\mathrm{lk}\nolimits_{\sK}\{m\}$ is a full subcomplex of $\sK_{[m-1]}$, so the polyhedral product $(B\mb G)^{\mathop\mathrm{lk}\nolimits_{\sK}\{m\}}$ is a retract of $(B\mb G)^{\sK_{[m-1]}}$ (see~\cite[Proposition~2.2]{pa-ve16}). Hence, $\pi_1(P_1\cap P_2)\to\pi_1(P_1)$ is also injective. Therefore, $(B\mb G)^\sK=P_1\cup P_2$ is aspherical by Whitehead's theorem. The isomorphism $\pi_1((B\mb G)^\sK)\cong\mb G^{\sK}$ follows easily from the Seifert--Van Kampen theorem.
\end{proof}

\begin{example}
Let $\sK$ be a flag complex in these examples.

\smallskip

\noindent\textbf{1.} Let $G_i=\Z$, the group of integers, for $i=1,\ldots,m$. The corresponding graph product is known as the \emph{right-angled Artin group} or \emph{partially commutative group} corresponding to~$\sK$ (or to the graph~$\sK^1$). It is given by
\begin{equation}\label{raagdef}
  \raag_\sK = \Z^\sK=F(a_1,\ldots,a_m)\big/ 
  \bigl(a_ia_j=a_ja_i\text{ for
  }\{i,j\}\in\sK\bigr),
\end{equation}
where $F(a_1,\ldots,a_m)$ denotes the free group on $m$ generators. The classifying space of $\raag_\sK$ is the polyhedral product $(B\Z)^\sK=(S^1)^\sK$, a subcomplex of the $m$-torus $T^m=(S^1)^m$. The homotopy fibration~\eqref{hofib} in $\top$ becomes
\[
  (\R,\Z)^\sK\longrightarrow (S^1)^\sK
  \longrightarrow (S^1)^m,
\]
where all spaces are aspherical. Passing to the fundamental groups, we obtain a short exact sequence
\[
  1\longrightarrow\raag_\sK'\longrightarrow\raag_\sK
  \stackrel\Ab\longrightarrow\Z^m\longrightarrow1,
\]
where $\Ab$ is the abelianisation map and $\raag_\sK'$ is the commutator subgroup.

\smallskip

\noindent\textbf{2.} Let $G_i=\Z_2$, a cyclic group of order~$2$, for $i=1,\ldots,m$. The corresponding graph product is known as the \emph{right-angled Coxeter group} corresponding to~$\sK$. It is given by
\begin{equation}\label{racgdef}
  \racg_\sK=\Z_2^\sK=F(a_1,\ldots,a_m)\big/ \bigl(a_i^2=1,\;a_ia_j=a_ja_i
  \text{ for }\{i,j\}\in\sK\bigr).  
\end{equation}
The classifying space of $\racg_\sK$ is the polyhedral product $(B\Z_2)^\sK=(\R P^\infty)^\sK$. The polyhedral product $(E\Z_2,\Z_2)^\sK$ is homotopy equivalent to the \emph{real moment-angle complex} $\rk=(D^1,S^0)^\sK$, where $D^1$ is a 1-disk and $S^0$ is its boundary. The homotopy fibration~\eqref{hofib} in $\top$ becomes
\[
  \rk\longrightarrow (\R P^\infty)^\sK
  \longrightarrow (\R P^\infty)^m,
\]
where all spaces are aspherical. Passing to the fundamental groups, we obtain a short exact sequence
\begin{equation}\label{racges}
  1\longrightarrow\racg_\sK'\longrightarrow\racg_\sK
  \stackrel\Ab\longrightarrow
  \Z_2^m\longrightarrow1.
\end{equation}

\smallskip

\noindent\textbf{3.} We denote the circle by $T$ or $T^1$ when it is viewed as an object in~$\tgp$ and by $S^1$ when it is viewed as an object in~$\top$. Let $G_i=T^1$ for $i=1,\ldots,m$. The corresponding graph product is the colimit of tori $T^I$ in the category of topological groups, known as the \emph{circulation group}~\cite{p-r-v04}:
\[
  \cir_\sK=T^\sK=\colim^\tgp_{I\in\sK}T^I.
\]
Its classifying space is the polyhedral product $(BT^1)^\sK=(\C P^\infty)^\sK$, known as the \emph{Davis--Januszkiewicz space}. The polyhedral product $(ET^1,T^1)^\sK$ is homotopy equivalent to the \emph{moment-angle complex} $\zk=(D^2,S^1)^\sK$, where $D^2$ is a 2-disk. The homotopy fibration~\eqref{hofib} becomes
\[
  \zk\longrightarrow (\C P^\infty)^\sK
  \longrightarrow (\C P^\infty)^m.
\]
Applying the Moore loop space functor $\varOmega\colon\top\to\tmn$ to the fibration above we obtain a commutative diagram in the homotopy category $\mathop{\mathit{Ho}}(\tmn)$
\begin{equation}\label{hofibzk}
\begin{tikzcd}
  &\varOmega\zk \arrow[r] 
  \arrow{d}{\simeq} &
  \varOmega(\C P^\infty)^\sK \arrow[r] \arrow{d}{\simeq} & T^m \arrow[equal]{d} &\\
  1\ar{r} &\cir'_\sK \ar{r} & \cir_\sK \ar{r}{\Ab} & T^m \ar{r} & 1,
\end{tikzcd}
\end{equation}
where $\cir'_\sK$ is the topological commutator subgroup of~$\cir_\sK$. The bottom line is a short exact sequence of strict homomorphisms in~$\tgp$ and is analogous to~\eqref{racges}.

\smallskip

\noindent\textbf{4.} More generally, let $G_i$ be the Moore loop monoid $\varOmega X_i$ of a simply connected space $X_i$, $i=1,\ldots,m$. Then, there is an equivalence in $\mathop{\mathit{Ho}}(\tmn)$
\[
  \varOmega(\mb X^\sK)\simeq(\varOmega\mb X)^\sK= \colim^\tmn_{I\in\sK}(\varOmega\mb X)^I
\]
for flag $\sK$, which follows by applying $\varOmega$ to the homotopy equivalence of Corollary~\ref{bcommut}. This expresses the fact that the Moore loop functor preserves graph products up to homotopy.
\end{example}

Given a commutative ring $R$ with unit, the loop homology $H_*(\varOmega X;R)$ of a simply connected space $X$ is an associative noncommutative algebra with respect to the Pontryagin product. We obtain a functor $H_*\varOmega\colon\top\to\ass$. When $R$ is a field or $H_*(\varOmega X;R)$ is free over $R$, there is a cocommutative coproduct $H_*(\varOmega X;R)\to H_*(\varOmega X;R)\otimes H_*(\varOmega X;R)$ induced by the diagonal map $\Delta\colon\varOmega X\to\varOmega X\times \varOmega X$, which makes $H_*(\varOmega X;R)$ into a cocommutative Hopf algebra~\cite[8.9]{mi-mo65}.

Given a sequence $\mb A=(A_1,\ldots,A_m)$ of associative algebras, the polyhedral product 
\[
  \mb A^\sK=\colim^\ass_{I\in\sK}\bigl(\bigotimes_{i\in I}A_i\bigr)
\]  
is also the graph product, since the category $\ass$ is fat.

\begin{theorem}[{\cite[Corollary~1.2]{dobr}}, see also~{\cite[Theorem~4.2]{cai}}]\label{looppres}
Let $\sK$ be a flag complex, $\mb X=(X_1,\ldots,X_m)$ a sequence of simply connected spaces, and $R$ a field. There are isomorphisms
\[
\begin{aligned}
  H_*\bigl(\varOmega(\mb X^\sK);R\bigr)
  &\cong (H_*(\varOmega\mb X;R))^\sK\\
  &\cong\mathop{\mbox{\Huge$\star$}}_{k=1}^m H_*(\varOmega X_i)\big/\bigl([x_i,x_j]=0\;
  \emph{ for }x_i\in H_*(\varOmega X_i),\,x_j\in H_*(\varOmega X_j),\,\{i,j\}\in\sK\bigr)
\end{aligned}  
\]
of graded $R$-algebras, where $[x_i,x_j]=x_ix_j-(-1)^{|x_i||x_j|}x_jx_i$. 
That is, the loop homology functor $H_*\varOmega\colon\top\to\ass$ with field coefficients preserves graph products.
\end{theorem}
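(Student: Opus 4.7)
The plan is to combine two ingredients already available in the excerpt. First, by item~4 of the example above, for flag $\sK$ the Moore loop functor gives a weak equivalence
\[
\varOmega(\mb X^\sK)\simeq\colim^\tmn_{I\in\sK}(\varOmega\mb X)^I
\]
in $\mathop{\mathit{Ho}}(\tmn)$. Second, by Proposition~\ref{fatiso} the category $\ass$ is fat, so the graph product of the loop homologies is their polyhedral product
\[
(H_*(\varOmega\mb X;R))^\sK=\colim^\ass_{I\in\sK}\bigotimes_{i\in I}H_*(\varOmega X_i;R).
\]
The Künneth theorem over the field $R$ identifies $H_*((\varOmega\mb X)^I;R)\cong\bigotimes_{i\in I}H_*(\varOmega X_i;R)$ as graded algebras, naturally in $I\in\catK$. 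Thus the theorem reduces to showing that $H_*(-;R)$ carries the $\catK$-colimit from $\tmn$ to $\ass$.

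To establish this commutation, I would induct on the number $m$ of vertices of $\sK$, with the base case $m=1$ immediate. For the inductive step I would use the pushout of simplicial complexes
\[
\sK=\mathop\mathrm{st}\nolimits_\sK\{m\}\cup_{\mathop\mathrm{lk}\nolimits_\sK\{m\}}\sK_{[m-1]}
\]
employed in the proof of Proposition~\ref{basph}. Links and full subcomplexes of a flag complex are flag on fewer than $m$ vertices, so the inductive hypothesis applies to $\sK_{[m-1]}$ and $\mathop\mathrm{lk}\nolimits_\sK\{m\}$. Using the product splitting $(\varOmega\mb X)^{\mathop\mathrm{st}\nolimits_\sK\{m\}}=(\varOmega\mb X)^{\mathop\mathrm{lk}\nolimits_\sK\{m\}}\times\varOmega X_m$ together with the Künneth formula, the inductive step reduces to checking that $H_*(-;R)$ takes the induced pushout of topological monoids to the corresponding pushout of associative algebras.

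The main obstacle is precisely this last interchange of $H_*(-;R)$ with the pushout in $\tmn$. Since $\mathop\mathrm{lk}\nolimits_\sK\{m\}$ is a full subcomplex of $\sK_{[m-1]}$, the inclusion $(\varOmega\mb X)^{\mathop\mathrm{lk}\nolimits_\sK\{m\}}\hookrightarrow(\varOmega\mb X)^{\sK_{[m-1]}}$ is a retract (cf.\ the proof of Proposition~\ref{basph}), hence a split injection on homology, and the corresponding polyhedral-product square is a homotopy pushout of topological monoids. A Mayer--Vietoris / bar-construction argument then identifies $H_*$ of the pushout with the amalgamated free product of the associated algebras, which by fatness of $\ass$ is the pushout in $\ass$. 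The detailed execution of this step is the content of the results of Dobrinskaya~\cite{dobr} and Cai~\cite{cai} from which the theorem is quoted.
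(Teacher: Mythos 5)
You should note first that the paper does not prove Theorem~\ref{looppres} at all: it is imported verbatim from Dobrinskaya \cite[Corollary~1.2]{dobr} and Cai \cite[Theorem~4.2]{cai}, so there is no in-paper argument to compare your proposal against. Judged on its own, your outline correctly assembles the formal reductions (the equivalence $\varOmega(\mb X^\sK)\simeq\colim^\tmn_{I\in\sK}(\varOmega\mb X)^I$ for flag $\sK$, the K\"unneth identification of $H_*((\varOmega\mb X)^I;R)$ with $\bigotimes_{i\in I}H_*(\varOmega X_i;R)$, and the star/link induction), but everything you do before the last paragraph is the easy part. The entire mathematical content of the theorem is the step you defer: that $H_*(-;R)$ carries the pushout of topological monoids
\[
  (\varOmega\mb X)^{\mathop\mathrm{lk}\nolimits_\sK\{m\}}\longrightarrow
  (\varOmega\mb X)^{\sK_{[m-1]}},\qquad
  (\varOmega\mb X)^{\mathop\mathrm{lk}\nolimits_\sK\{m\}}\longrightarrow
  (\varOmega\mb X)^{\mathop\mathrm{st}\nolimits_\sK\{m\}}
\]
to the amalgamated free product of Pontryagin algebras. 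Writing ``a Mayer--Vietoris / bar-construction argument then identifies $H_*$ of the pushout with the amalgamated free product'' and then citing \cite{dobr} and \cite{cai} for ``the detailed execution'' is not a proof of that step; it is a restatement of the theorem being proved.

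Two concrete issues are hidden in that sentence. First, the polyhedral product is a strict colimit in $\tmn$, and you need it to compute a \emph{homotopy} pushout before any homological argument applies; this requires a cofibrancy statement in an appropriate model structure on $\tmn$ (this is where the machinery of \cite{p-r-v04} and the flagness hypothesis actually enter, and your sketch does not address it). Second, identifying the homology of such a homotopy pushout with $A_1\star_{A_0}A_2$ requires more than the split injectivity you extract from the retraction $(\varOmega\mb X)^{\sK_{[m-1]}}\to(\varOmega\mb X)^{\mathop\mathrm{lk}\nolimits_\sK\{m\}}$: one needs the graph product algebra over a full subcomplex to sit inside the larger one as a subalgebra over which the larger algebra is free as a module (a normal-form statement for graph products of algebras), so that the algebraic Mayer--Vietoris sequence degenerates. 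Neither point is routine, and together they constitute the theorem. So your proposal is a reasonable reduction to the known hard step, but as a blind proof it has a genuine gap precisely where the cited references do their work.
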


The cases $X_i=S^{2p+1}$ (an odd-dimensional sphere) and $X_i=\C P^\infty$ are of particular importance. We have $H_*(\varOmega S^{2p+1};R)=R[v]$, the polynomial (symmetric) algebra on a single generator $v$ of degree~$2p$. For flag $\sK$, the graph product algebra $H_*(\varOmega(S^{2p+1})^\sK;R)$ is the algebraic counterpart of the right-angled Artin group~\eqref{raagdef}:

\begin{theorem}[\cite{bu-go11}]\label{loopsS}
For any flag complex $\sK$ and $p\ge1$, there are isomorphisms
\begin{equation}\label{lscolim}
\begin{aligned}
  H_*\bigl(\varOmega(S^{2p+1})^\sK;R\bigr)
  &\cong R[v]^\sK=
  \colim^\ass_{I\in\sK}R[v_i\colon i\in I]\\
  &\cong
  T(v_1,\ldots,v_m)/(v_iv_j-v_jv_i=0\emph{ for }\{i,j\}\in\sK)  
\end{aligned}
\end{equation}
of graded $R$-algebras, where $T(v_1,\ldots,v_m)$ is the free associative algebra, $R[v_i\colon i\in I]$ is the polynomial algebra and $|v_i|=2p$. 
\end{theorem}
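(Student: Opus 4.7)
The plan is to derive this statement as a direct specialisation of Theorem~\ref{looppres} combined with the classical computation of the loop homology of an odd sphere, checking that the hypotheses and sign conventions work out.

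First, I would recall the Bott--Samelson/James result $H_*(\varOmega S^{2p+1};R)\cong R[v]$ with $|v|=2p$. Since this is a free $R$-module, the Künneth formula applies over any commutative ring $R$, and in particular over a field, so the hypotheses of Theorem~\ref{looppres} are satisfied (and the argument there, which proceeds via iterated Künneth, extends to arbitrary $R$ in this torsion-free setting). Taking $X_k=S^{2p+1}$ for $k=1,\ldots,m$ and applying Theorem~\ref{looppres} then yields
\[
  H_*\bigl(\varOmega(S^{2p+1})^\sK;R\bigr)\cong
  \mathop{\mbox{\Huge$\star$}}_{k=1}^m R[v_k]\big/\bigl([v_i,v_j]=0
  \text{ for }\{i,j\}\in\sK\bigr).
\]
Because $|v_i|\cdot|v_j|=4p^2$ is even, the graded commutator collapses to $[v_i,v_j]=v_iv_j-v_jv_i$, which exactly matches the relations in the target of~\eqref{lscolim}.

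Second, I would identify the free product of polynomial algebras in $\ass$ with the tensor algebra. Each $R[v_k]$ is the free associative $R$-algebra on one generator (commutativity in a single variable is vacuous), and the coproduct in $\ass$ of free associative algebras on disjoint generating sets is the free associative algebra on the union. Hence $\mathop{\mbox{\Huge$\star$}}_{k=1}^m R[v_k]\cong T(v_1,\ldots,v_m)$, giving the second isomorphism in~\eqref{lscolim}. The colimit description $R[v]^\sK=\colim^\ass_{I\in\sK}R[v_i\colon i\in I]$ is just an unpacking of~\eqref{mcprod} in the fat category $\ass$, together with Proposition~\ref{fatiso} which identifies this colimit with the graph product on~$\sK^1$.

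The only genuinely non-formal ingredient is Theorem~\ref{looppres} itself, which in turn rests on the flagness of $\sK$ and on the loop-space splittings for polyhedral products of suspensions/simply connected spaces. Given that input, there is no real obstacle: the present theorem is the simplest instance of Theorem~\ref{looppres}, singled out because the building blocks $H_*(\varOmega S^{2p+1};R)$ are concentrated in even degrees and the resulting partially commutative associative algebra is the algebraic mirror of the right-angled Artin group~\eqref{raagdef}, foreshadowing the connection with $\gr_\bullet\gamma(\raag_\sK)$ described in the introduction.
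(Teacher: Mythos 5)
Your derivation is correct, but be aware that the paper itself contains no proof of Theorem~\ref{loopsS}: the result is simply quoted from Bubenik--Gold~\cite{bu-go11}, where it is established directly. What you do instead is obtain it as the special case $X_k=S^{2p+1}$ of the general graph-product theorem for loop homology (Theorem~\ref{looppres}, due to Dobrinskaya and Cai). All the ingredients you use are sound: $H_*(\varOmega S^{2p+1};R)\cong R[v]=T(v)$ with $|v|=2p$ by James/Bott--Samelson, the graded commutator reduces to $v_iv_j-v_jv_i$ because $|v_i||v_j|$ is even, the coproduct in $\ass$ of free associative algebras on disjoint generating sets is the free associative algebra on their union, and Proposition~\ref{fatiso} justifies the colimit/graph-product description. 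This is a legitimate and, within the logic of the paper, natural route. The one point that needs more care is the coefficient ring: Theorem~\ref{looppres} is stated only for a field $R$, whereas Theorem~\ref{loopsS} is asserted over an arbitrary commutative ring. Your parenthetical claim that the proof of Theorem~\ref{looppres} ``extends to arbitrary $R$ in this torsion-free setting'' is plausible --- the factors $H_*(\varOmega S^{2p+1};R)$ are free $R$-modules, so the relevant K\"unneth maps are isomorphisms --- but it is an assertion about the internals of a proof you have not inspected rather than a consequence of the quoted statement. To close this gap one should either check that the argument of~\cite{dobr} only requires $R$-freeness of the loop homology of the factors, or fall back on the original argument of~\cite{bu-go11}, which establishes the statement integrally; as written, your proof is complete only when $R$ is a field.
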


Similarly, the algebra $H_*(\varOmega(\C P^\infty)^\sK;R)$ is related to right-angled Coxeter groups. To elaborate on this, consider the exterior algebra 
\[
  H_*(\varOmega(\C P^\infty)^m;R)=H (T^m;R)=\Lambda[u_1,\ldots,u_m],
\] 
where $|u_i|=1$. For the homotopy fibration in~\eqref{hofibzk}, we obtain a short exact sequence of loop homology algebras~\cite[Proposition~8.4.1]{bu-pa15}
\begin{equation}\label{lhes}
  R\longrightarrow H_*(\varOmega \zk;R)
  \longrightarrow H_*(\varOmega(\C P^\infty)^\sK;R)
  \longrightarrow 
  \Lambda[u_1,\ldots,u_m]\longrightarrow R,
\end{equation}
that is, $H_*(\varOmega \zk;R)\subset H_*(\varOmega(\C P^\infty)^\sK;R)$ is a normal subalgebra~\cite{mi-mo65} and 
\[
  H_*(\varOmega(\C P^\infty)^\sK;R)/\!/H_*(\varOmega \zk;R)=\Lambda[u_1,\ldots,u_m].
\]
By analogy with~\eqref{racges}, we can view $H_*(\varOmega \zk;R)$ as the \emph{commutator subalgebra} of $H_*(\varOmega(\C P^\infty)^\sK;R)$, or the algebra kernel of the abelianisation map from the noncommutative algebra $H_*(\varOmega(\C P^\infty)^\sK;R)$.

%In the flag case, the algebra $H_*(\varOmega(\C P^\infty)^\sK;R)$ has a colimit decomposition of the form~\eqref{mcprod}, in the category of graded algebras:

\begin{theorem}[{\cite[Theorem~9.3]{pa-ra08}, \cite[Theorem~8.5.2, Corollary~8.5.3]{bu-pa15}}]\label{padjs}
For any flag complex $\sK$, there are isomorphisms
\begin{equation}\label{lzkcolim}
\begin{aligned}
  H_*(\varOmega(\C P^\infty)^\sK;R)&
  \cong\Lambda[u]^\sK=
  \colim^\ass_{I\in\sK}\Lambda[u_i\colon i\in I]\\
  &\cong T(u_1,\ldots,u_m)/(u_i^2=0,\;
  u_iu_j+u_ju_i=0\emph{ for }\{i,j\}\in\sK)
\end{aligned}
\end{equation}
of graded $R$-algebras, where $\Lambda[u_i\colon i\in I]$ is the exterior algebra and $|u_i|=1$.
\end{theorem}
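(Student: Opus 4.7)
The plan is to derive the result as a direct specialisation of Theorem~\ref{looppres} applied to $\mb X=(\C P^\infty,\ldots,\C P^\infty)$. The first step is to identify the Pontryagin ring: since $\varOmega\C P^\infty\simeq S^1$, we have $H_*(\varOmega\C P^\infty;R)=\Lambda[u]$ with $|u|=1$. Each $\C P^\infty$ is simply connected and $\sK$ is flag, so Theorem~\ref{looppres} yields an isomorphism of graded $R$-algebras
\[
  H_*\bigl(\varOmega(\C P^\infty)^\sK;R\bigr)\cong\Lambda[u]^\sK,
\]
where the right-hand side is the polyhedral (equivalently, graph) product of $m$ copies of $\Lambda[u]$ taken in the symmetric monoidal category~$\ass$.

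The second step is to rewrite $\Lambda[u]^\sK$ in the two explicit forms appearing in the statement. The definition~\eqref{mcprod} of the polyhedral product in $\ass$ (whose monoidal structure is $\otimes_R$) immediately gives
\[
  \Lambda[u]^\sK=\colim^\ass_{I\in\sK}\bigotimes_{i\in I}\Lambda[u_i]=\colim^\ass_{I\in\sK}\Lambda[u_i\colon i\in I],
\]
which is the first promised isomorphism. For the second, I would unpack this colimit in $\ass$ in complete analogy with the group case of Example~\ref{cgrpr}: the colimit is the free product (the tensor algebra on $u_1,\ldots,u_m$ modulo the per-factor relations $u_i^2=0$) quotiented further by the relations that make each edge $\{i,j\}\in\sK$ generate a copy of $\Lambda[u_i]\otimes\Lambda[u_j]$. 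Since the graded tensor product of two exterior algebras on odd-degree generators enforces $u_iu_j=-u_ju_i$ by the Koszul sign rule, this yields precisely the claimed presentation
\[
  T(u_1,\ldots,u_m)\big/\bigl(u_i^2=0,\;u_iu_j+u_ju_i=0\text{ for }\{i,j\}\in\sK\bigr).
\]

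The main obstacle is one of coefficients rather than of structure: Theorem~\ref{looppres} is stated with $R$ a field, whereas Theorem~\ref{padjs} allows an arbitrary commutative ring. To bridge this gap, I would first establish the isomorphism for $R=\Q$ and for $R=\Z_p$ at every prime $p$ via Theorem~\ref{looppres}, then verify directly that $\Lambda[u]^\sK$ is a free abelian group of finite rank in each degree by exhibiting a normal form for words in $u_1,\ldots,u_m$ modulo the presented relations. A rank comparison with $H_*(\varOmega(\C P^\infty)^\sK;\Z)$, together with naturality in $R$ and the universal coefficient theorem, then upgrades the isomorphism to arbitrary~$R$.
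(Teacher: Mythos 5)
The paper itself offers no proof of Theorem~\ref{padjs}: it is imported from \cite{pa-ra08} and \cite{bu-pa15}, where it is established directly over an arbitrary commutative ring $R$, via the colimit decomposition of $\varOmega(\C P^\infty)^\sK$ in topological monoids from \cite{p-r-v04} and the Koszul-type duality between the Stanley--Reisner ring $R[\sK]$ (quadratic exactly when $\sK$ is flag) and the loop homology. Your route --- specialising Theorem~\ref{looppres} to $X_i=\C P^\infty$ using $\varOmega\C P^\infty\simeq S^1$, $H_*(\varOmega\C P^\infty;R)=\Lambda[u]$ --- is logically sound within this paper and correctly identifies $\Lambda[u]^\sK$ with the presented algebra (the colimit in $\ass$ is the free product of the $\Lambda[u_i]$ modulo the graded commutators $u_iu_j+u_ju_i$ for edges $\{i,j\}\in\sK$); but be aware that historically the implication runs the other way, Theorem~\ref{padjs} being an older special case of Theorem~\ref{looppres}, so your argument is genuinely different from the cited ones. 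What each approach buys: the sources get arbitrary $R$ for free, while you get a two-line deduction over fields at the cost of a separate integrality bootstrap. That bootstrap is the only place where real work remains, and it is currently only sketched: you need (a) a normal-form (diamond lemma) argument showing that $T_\Z(u_1,\ldots,u_m)/(u_i^2,\;u_iu_j+u_ju_i)$ is degreewise free of finite rank, with rank unchanged under base change to any field; (b) an explicit comparison map over $\Z$, e.g.\ sending $u_i$ to the image of the fundamental class of $S^1=\varOmega\C P^\infty$ under the coordinate inclusion, with the relations checked by factoring through $H_*(S^1)$ and $H_*(S^1\times S^1)$ for edges; and (c) the standard finite-type argument that a map of degreewise finitely generated graded groups which is an isomorphism over $\Q$ and over every $\Z_p$, with torsion-free target (forced by the dimension count), is an isomorphism over $\Z$, whence over any $R$. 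None of these steps should fail, but ``naturality in $R$ and the universal coefficient theorem'' compresses precisely the content that the direct proofs in \cite{pa-ra08,bu-pa15} handle uniformly, so it should be written out.
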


The colimit decomposition~\eqref{lzkcolim} of the graded algebra $H_*(\varOmega(\C P^\infty)^\sK;R)$ parallels the colimit decomposition $\racg_\sK=\colim^\grp_{I\in\sK}\Z_2^I$ of a right-angled Coxeter group. This analogy can be pursued further by considering the commutator sugroups and the commutator subalgebras.
We denote the group commutator $g^{-1}h^{-1}gh$ by $(g,h)$
and denote the graded algebra commutator $uv-(-1)^{|u||v|}vu$ by $[u,v]$.

\begin{theorem}[{\cite[Theorem~4.5]{pa-ve16}}]\label{basgc}
The commutator
subgroup $\racg'_\sK$ has a finite minimal generator set
consisting of $\sum_{J\subset[m]}\mathop{\mathrm{rank}}\widetilde H_0(\sK_J)$
iterated commutators
\[
  (g_j,g_i),\quad (g_{k_1},(g_j,g_i)),\quad\ldots,\quad
  (g_{k_1},(g_{k_2},\cdots(g_{k_{m-2}},(g_j,g_i))\cdots)),
\]
where $k_1<k_2<\cdots<k_{\ell-2}<j>i$, $k_s\ne i$ for any~$s$, and $i$ is the smallest vertex in a connected component not containing~$j$ of the subcomplex
$\sK_{\{k_1,\ldots,k_{\ell-2},j,i\}}$.
\end{theorem}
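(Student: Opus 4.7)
The plan is to identify $\racg'_\sK$ with the fundamental group of the real moment-angle complex $\rk=(D^1,S^0)^\sK$ from the right-angled Coxeter example above, and then combine a topological computation of the abelianisation with a direct algebraic rewriting argument. Since $\racg_\sK$ is a graph product of copies of~$\Z_2$, it depends only on~$\sK^1$; replacing $\sK$ by the clique complex of $\sK^1$ changes neither $\racg_\sK$ nor the quantity $\mathrm{rank}\,\widetilde H_0(\sK_J)$, so we may assume $\sK$ is flag. Then Proposition~\ref{basph} gives that $(\R P^\infty)^\sK$ is aspherical with fundamental group $\racg_\sK$, and the homotopy fibration~\eqref{hofib} together with~\eqref{racges} identifies $\pi_1(\rk)$ with $\racg'_\sK$.

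That each iterated commutator in the statement lies in $\racg'_\sK$ is immediate, since any commutator maps to the identity in the abelianisation~$\Z_2^m$. For the lower bound on the number of generators, I would invoke the Hochster-type decomposition
\[
  \racg'_\sK/(\racg'_\sK)'\cong H_1(\rk;\Z)\cong
  \bigoplus_{J\subset[m]}\widetilde H_0(\sK_J),
\]
which follows from the natural cell decomposition of $\rk$ indexed by pairs $(I,A)$ with $I\in\sK$ and $A\subset[m]\setminus I$. This forces any generating set of $\racg'_\sK$ to have cardinality at least $\sum_J\mathrm{rank}\,\widetilde H_0(\sK_J)$, matching the count stated in the theorem.

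For the upper bound, I would use an inductive rewriting procedure in~$\racg_\sK$. Given a word $w$ in $g_1,\ldots,g_m$ whose image in $\Z_2^m$ is trivial, every generator appears an even number of times. Let $j$ be the largest index occurring in~$w$; using the commutation relations $g_jg_k=g_kg_j$ for $\{j,k\}\in\sK$ together with $g_j^2=1$, I would push two neighbouring occurrences of $g_j$ toward each other. Either they cancel, shortening the word, or the subword between them is extracted as an iterated commutator $(g_{k_1},(g_{k_2},\cdots(g_{k_{\ell-2}},(g_j,g_i))\cdots))$, where $k_1<\cdots<k_{\ell-2}$ are the intermediate indices (arranged increasingly using commutation whenever possible) and $g_i$ is the remaining surviving letter. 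The requirement that $i$ lie in a component of $\sK_J$ distinct from the one containing~$j$, where $J=\{k_1,\ldots,k_{\ell-2},j,i\}$, is forced: otherwise $g_i$ can be slid past all intermediate generators via successive $\sK$-relations and the bracket collapses. The requirement that $i$ be the smallest such vertex arises from the freedom to replace any vertex in a given connected component by its minimum through $\sK$-commutations.

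The main obstacle will be the rewriting step: one must verify not only that the procedure terminates yielding only commutators of the specified shape, but also that the resulting combinatorial data—the multi-indices $(k_1,\ldots,k_{\ell-2},j,i)$ satisfying the stated conditions—correspond bijectively to a basis of $H_1(\rk;\Z)$ under abelianisation, so that no non-trivial relations exist among the listed commutators modulo $(\racg'_\sK)'$. This requires a careful combinatorial analysis of how successive $\sK$-commutations permute vertex labels inside nested brackets and how this interacts with the connectedness of full subcomplexes~$\sK_J$.
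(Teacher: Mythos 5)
The paper itself does not prove this statement: it is quoted verbatim from \cite[Theorem~4.5]{pa-ve16}, so your proposal can only be measured against the strategy of that source and its own internal correctness. Your architecture is the standard (and correct) one: reduce to flag $\sK$ (legitimate, since $\racg_\sK$, the listed commutators, and $\sum_J\mathop{\mathrm{rank}}\widetilde H_0(\sK_J)$ all depend only on $\sK^1$), identify $\racg'_\sK$ with $\pi_1(\rk)$ via Proposition~\ref{basph} and~\eqref{racges}, and get the lower bound from $H_1(\rk;\Z)\cong\bigoplus_{J\subset[m]}\widetilde H_0(\sK_J)$, which is free abelian of exactly the stated rank; since the listed commutators are in bijection with the connected components of the $\sK_J$ not containing $\max J$, the cardinalities match and minimality follows once generation is known. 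All of this is sound.

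The genuine gap is the generation step, which is the actual content of the theorem and which you leave as an acknowledged sketch. Two concrete problems. First, after the extraction $g_jBg_j=B\cdot(B,g_j)$ you are left with a commutator $(B,g_j)$ in which $B$ is an arbitrary word in the remaining generators; converting this into a product of the \emph{specific} nested commutators with strictly increasing, pairwise distinct indices $k_1<\cdots<k_{\ell-2}$ requires repeated Hall--Witt-type commutator identities and a carefully organised induction (on the largest index and on word length simultaneously). The phrase ``arranged increasingly using commutation whenever possible'' does not address intermediate letters that do not commute, repeated letters, or further occurrences of large indices inside $B$, and no termination argument is given. Second, your justification of the condition that $i$ lie in a component of $\sK_{\{k_1,\ldots,k_{\ell-2},j,i\}}$ not containing $j$ --- ``otherwise $g_i$ can be slid past all intermediate generators'' --- is incorrect as stated: if $i$ and $j$ lie in the same component they are joined by a \emph{path} of edges, not by edges to every intermediate vertex, and the resulting collapse of the commutator into commutators supported on proper subsets of $J$ is itself a nontrivial identity that must be proved (it is what makes the count come out right). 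Until both points are supplied, the upper bound, and hence the theorem, is not established.
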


\begin{theorem}[{\cite[Theorem~4.3]{g-p-t-w16}}]\label{baslc}
Assume that $\sK$ is flag and $R$ is a field. 
The algebra $H_*(\varOmega\zk;R)$, viewed as the commutator
subalgebra of $H_*(\varOmega(\C P^\infty)^\sK;R)$ via exact sequence~\eqref{lhes},
is multiplicatively generated by $\sum_{J\subset[m]}\mathop{\mathrm{rank}}\widetilde H_0(\sK_J)$ iterated commutators of the form
\[
  [u_j,u_i],\quad [u_{k_1},[u_j,u_i]],\quad\ldots,\quad
  [u_{k_1},[u_{k_2},\cdots[u_{k_{m-2}},[u_j,u_i]]\cdots]]
\]
where $k_1<k_2<\cdots<k_p<j>i$, $k_s\ne i$ for any~$s$, and $i$ is
the smallest vertex in a connected component not containing~$j$ of
the subcomplex $\sK_{\{k_1,\ldots,k_p,j,i\}}$. Furthermore, this
multiplicative generating set is minimal, that is, the commutators
above form a basis in the submodule of indecomposables
in~$H_*( \varOmega\zk;R)$.
\end{theorem}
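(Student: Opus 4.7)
The plan is to work inside the explicit partially commutative exterior algebra $A:=\Lambda[u]^\sK$ furnished by Theorem~\ref{padjs}, identifying $C:=H_*(\varOmega\zk;R)$ with the kernel of the abelianisation $\varepsilon\colon A\to\Lambda[u_1,\ldots,u_m]$ via the short exact sequence~\eqref{lhes}. Both $A$ and $C$ inherit the natural $\Z^m$-multigrading with $|u_i|=e_i$, and since $u_i^2=0$ only squarefree multidegrees $J\subset[m]$ are nonzero. This reduces the problem to a finite-dimensional analysis in each multidegree $J$: the space $A_J$ is spanned by reduced words in $\{u_i\colon i\in J\}$ using every letter exactly once, $\varepsilon|_{A_J}$ surjects onto a one-dimensional space, and so $\dim_R C_J=\dim_R A_J-1$ whenever $A_J\ne 0$.

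The first half of the argument verifies that each iterated commutator listed in the theorem has multidegree $J=\{k_1,\ldots,k_p,j,i\}$ and lies in $C_J$, and that the number of such commutators with multidegree $J$ equals $\mathop{\mathrm{rank}}\widetilde H_0(\sK_J)$. The largest index $j$ and the ``bottom'' index $i$ are forced by the combinatorial conditions, while $i$ ranges over smallest vertices of the connected components of $\sK_J$ distinct from the component containing~$j$. This mirrors the counting of commutator generators of $\racg'_\sK$ in Theorem~\ref{basgc} and gives a bijection between the listed commutators of multidegree $J$ and a basis of $\widetilde H_0(\sK_J)$.

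The second half proves that these commutators multiplicatively generate $C$ and form a minimal generating set. The key step is to compute the rank of the multigraded indecomposables $(QC)_J=(C_+/C_+\cdot C_+)_J$. Using the multigraded Poincar\'e series of $A$ derivable from the colimit presentation~\eqref{lzkcolim}, together with the Milnor--Moore theorem applied to the cocommutative Hopf algebra $A$ and its normal sub-Hopf algebra~$C$, one extracts the Poincar\'e series of $QC$ multigraded by $J\subset[m]$. Matching $\mathop{\mathrm{rank}}(QC)_J=\mathop{\mathrm{rank}}\widetilde H_0(\sK_J)$ then yields generation and minimality simultaneously. The main obstacle is precisely this final rank identification, converting algebraic indecomposable data into the topological invariant $\widetilde H_0(\sK_J)$. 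I expect the cleanest route uses induction on $|J|$, together with the fact that for flag~$\sK$ each full subcomplex $\sK_J$ is itself flag, and that the pushout decomposition of $\sK_J$ along its connected components allows each component to contribute independently to~$QC$.
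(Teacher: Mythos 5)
First, a bookkeeping remark: the paper does not prove Theorem~\ref{baslc}; it is quoted from \cite[Theorem~4.3]{g-p-t-w16}, so there is no in-paper argument to measure your proposal against. Judged on its own, your plan breaks at the very first step, in two places. (i) The relation $u_i^2=0$ does \emph{not} confine $A=\Lambda[u]^\sK$ to squarefree multidegrees: in $T(u_1,\ldots,u_m)/(u_i^2=0,\ u_iu_j+u_ju_i=0 \text{ for }\{i,j\}\in\sK)$ the alternating words $u_iu_ju_iu_j\cdots$ are nonzero whenever $\{i,j\}\notin\sK$ (equivalently, $[u_i,u_j]^2\neq0$), and $A$ is infinite-dimensional for every non-simplex $\sK$. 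So the ``finite-dimensional analysis over subsets $J\subset[m]$'' misses most of the algebra, and in particular ruling out indecomposables of $H_*(\varOmega\zk;R)$ in non-squarefree multidegrees is a genuine part of the generation statement that your setup silently assumes away. (ii) $C=H_*(\varOmega\zk;R)$ is \emph{not} $\Ker\varepsilon$: by~\eqref{lhes} it is the normal sub-Hopf-algebra with $A/\!/C=\Lambda[u_1,\ldots,u_m]$, whereas $\Ker\varepsilon$ is the two-sided ideal $A\cdot C_+$, which strictly contains $C_+$. Consequently $\dim C_J=\dim A_J-1$ is false. Since $A$ is free over $C$ with $A\cong C\otimes\Lambda$ as multigraded modules, for squarefree $J$ one has $\dim A_J=\sum_{J'\subset J}\dim C_{J'}$, i.e.\ $\dim C_J=\dim A_J-\sum_{J'\subsetneq J}\dim C_{J'}$. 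For $\sK$ three disjoint points and $J=\{1,2,3\}$ this gives $\dim C_J=6-(1+1+1+1)=2=\mathop{\mathrm{rank}}\widetilde H_0(\sK_J)$, while your formula gives $5$; with only two listed commutators and no nonzero decomposables available in that multidegree, your own count already contradicts the theorem.

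What does survive is the enumeration in your first half: for squarefree $J$ the listed commutators are indexed by the connected components of $\sK_J$ not containing $\max J$, so there are exactly $\mathop{\mathrm{rank}}\widetilde H_0(\sK_J)$ of them, and the Hopf-algebraic framing (controlling $QC=C_+/C_+\cdot C_+$) is the right language. But the decisive step is exactly the one you defer: you must show that $(QC)_J$ has rank $\mathop{\mathrm{rank}}\widetilde H_0(\sK_J)$ for squarefree $J$ and vanishes otherwise, \emph{and} that the specific listed commutators project to a basis of $(QC)_J$ rather than merely having the right cardinality. Moreover, the Poincar\'e series of $C$ (which one can indeed extract from those of $A$ and $\Lambda$ via freeness) does not determine the Poincar\'e series of $QC$: that passage would require $C$ to be free on its indecomposables, which by \cite[Theorem~4.6]{g-p-t-w16} holds only when $\sK^1$ is chordal. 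So ``matching ranks yields generation and minimality simultaneously'' is a restatement of the theorem, not a proof; the linear-independence of the commutators modulo decomposables and the vanishing of $QC$ outside squarefree multidegrees both need genuine arguments (e.g.\ induction on vertices using the decomposition of $\sK$ along $\mathop{\mathrm{st}}\{m\}$ and $\sK_{[m-1]}$, as in the published proof).
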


\section{Central series and associated Lie algebras}

A \emph{Lie algebra} over a commutative unitary ring $R$ is an $R$-module $L$ together with an $R$-linear map $L\times L\to L$, $(x,y)\mapsto[x,y]$, satisfying (1) $[x,x]=0$ and (2) the Jacobi identity $[[x,y],z]+[[y,z],x]+[[z,x],y]=0$ for all $x,y,z\in L$. Condition (1) implies $[x,y]=-[y,x]$. 

There is a functor $\mathcal L\colon\ass\to\lie$ that takes an associative $R$-algebra $A$ to the Lie algebra $\mathcal L A$ on the same $R$-module with the Lie bracket defined by $[x,y]=xy-yx$. The functor $\mathcal L$ has a left adjoint, $\mathcal U\colon\lie\to\ass$, satisfying
\[
  \Hom_{\ass}(\mathcal UL,A)=\Hom_{\lie}(L,\mathcal L A).
\]
The associative algebra $\mathcal U L$ together with the canonical Lie algebra homomorphism $L\to\mathcal L\mathcal UL$ is called the \emph{universal enveloping algebra} of~$L$. Explicitly, $\mathcal UL$ is the quotient of $TL$ (the free associate algebra on~$L$) by the two-sided ideal generated by the elements $xy-yx-[x,y]$, $x,y\in L$. If $L$ is a free $R$-module, then $L\to\mathcal L\mathcal UL$ is injective~\cite[Corollary~III.1]{serr06}.

\smallskip

Let $G$ be a group (if necessary, viewed as an object in $\tgp$ with discrete topology). Given two subgroups $H$ and $W$ of $G$, denote by $(H,W)$ the subgroup generated by all commutators $(h,w)$ with $h\in H$ and $w\in W$. 
In particular, $(G, G)=G'$ is the commutator subgroup.

A (descending) \emph{central series} or \emph{central filtration} on $G$ is a sequence of subgroups $\varGamma(G) = \{ \varGamma_k(G)\colon k\ge 1\}$ such that $\varGamma_1(G)=G$, 
$\varGamma_{k+1}(G)\subset\varGamma_k(G)$ and $(\varGamma_k(G),\varGamma_l(G))\subset\varGamma_{k+\ell}(G)$ for any $k,\ell$. It follows immediately that each $\varGamma_k(G)$ is a normal subgroup and the quotient $\varGamma_k(G)/\varGamma_{k+1}(G)$ is abelian.

The fastest descending central series is the \emph{lower central series} $\gamma(G)$, given by $\gamma_1(G)=G$ and $\gamma_k(G) = (\gamma_{k-1}(G), G)$ for $k\ge2$.

The \emph{associated Lie algebra} of a central series $\varGamma(G)$ is the associated graded abelian group
\[
  \gr_\bullet\varGamma(G)
  =\bigoplus_{k\ge1}\varGamma_k(G)/\varGamma_{k+1}(G)
\]
with the Lie bracket defined by
\[
  [\overline g_k,\overline g_\ell]
  =\overline{(g_k,g_\ell)},
\]
where $\overline g_k$ denotes the image of $g_k\in\varGamma_k(G)$ in the quotient group $\varGamma_k(G)/\varGamma_{k+1}(G)$. 

Given a set $S$, the \emph{free Lie algebra} $\FL(x_s\colon s\in S)$ generated by~$S$ is defined by the universal property: any set map $S\to L$ to a Lie algebra $L$ extends uniquely to a homomorphism  $\FL(x_s\colon s\in S)\to L$ of Lie algebras.
By the classical result of Magnus~\cite[Theorem~5.12]{m-k-s76}, the Lie algebra associated with the lower central series of a free group $F(a_1,\ldots,a_m)$ is the free Lie algebra $\FL(\overline a_1,\ldots,\overline a_m)$ over~$\Z$. 
There is the following generalisation to right-angled Artin groups:

\begin{theorem}[\cite{pa-su06}, see also~\cite{wade15}]\label{lielcfA}
The Lie algebra associated with the lower central series of a right-angled Artin group $\raag_\sK$ is given by
\begin{equation}\label{graag}
  \gr_\bullet\gamma(\raag_\sK)\cong
  \FL(v_1,\ldots,v_m)\big/
  \bigl([v_i,v_j]=0\;
  \emph{for }\{i,j\}\in\sK\bigr).
\end{equation}
Furthermore, each component $\gr_k\gamma(\raag_\sK)$ is a free abelian group.
\end{theorem}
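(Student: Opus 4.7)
The plan is to adapt Magnus's classical argument for the free group to the partially commutative setting. Let $L$ denote the Lie algebra on the right of~\eqref{graag}, graded so that each $v_i$ has degree~$1$. The defining relations $(a_i,a_j)=1$ in $\raag_\sK$ for $\{i,j\}\in\sK$ force the assignment $v_i\mapsto\overline a_i\in\raag_\sK/\gamma_2(\raag_\sK)$ to extend to a graded Lie algebra homomorphism
\[
  \varphi\colon L\longrightarrow\gr_\bullet\gamma(\raag_\sK).
\]
Since $\raag_\sK$ is generated by $a_1,\ldots,a_m$, a standard commutator-collection argument shows that each $\gr_k\gamma(\raag_\sK)$ is spanned by $k$-fold iterated brackets of the classes $\overline a_i$, so $\varphi$ is surjective. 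Everything then reduces to proving that $\varphi$ is also injective.

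To this end I would construct a Magnus-type embedding. Let $A$ be the ungraded partially commutative associative $\Z$-algebra with generators $v_1,\ldots,v_m$ and relations $v_iv_j=v_jv_i$ for $\{i,j\}\in\sK$ (the ungraded analogue of~\eqref{lscolim}), filtered by the powers $F^kA=(A^+)^k$ of its augmentation ideal $A^+=(v_1,\ldots,v_m)$, and let $\hat A$ denote its completion. Since $(1+v_i)(1+v_j)=(1+v_j)(1+v_i)$ whenever $\{i,j\}\in\sK$, the rule $a_i\mapsto 1+v_i$ extends to a group homomorphism
\[
  \mu\colon\raag_\sK\longrightarrow\hat A^\times
\]
(each $1+v_i$ is invertible, with inverse $\sum_{n\ge0}(-v_i)^n$). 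A routine induction on~$k$, matching the leading term of the group commutator $\mu((g,h))=\mu(g)^{-1}\mu(h)^{-1}\mu(g)\mu(h)$ with the associative commutator, shows that $\mu$ carries $\gamma_k(\raag_\sK)$ into $1+F^k\hat A$ and induces a graded Lie algebra homomorphism
\[
  \bar\mu\colon\gr_\bullet\gamma(\raag_\sK)\longrightarrow\gr_\bullet\hat A\cong A,
\]
where the target carries the commutator bracket.

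The composition $\bar\mu\circ\varphi\colon L\to A$ fixes each generator $v_i$ and therefore coincides with the canonical Lie algebra map of $L$ into $\mathcal L A$. Because the universal enveloping algebra functor $\mathcal U\colon\lie\to\ass$ is a left adjoint and therefore preserves the colimit~\eqref{mcprod} defining the graph product, one has $\mathcal U L\cong A$; the canonical map $L\to\mathcal L\mathcal U L$ is injective as soon as $L$ is a free $\Z$-module. This will make $\bar\mu\circ\varphi$ injective, hence $\varphi$ injective, yielding the isomorphism~\eqref{graag}; the freeness statement for each $\gr_k\gamma(\raag_\sK)$ is then immediate from freeness of~$L_k$.

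The hard part is precisely to verify that every graded component $L_k$ is a free abelian group of finite rank. One option is to exhibit an explicit Hall-type basis of $L$ adapted to the partial commutativity prescribed by~$\sK$, in the spirit of~\cite{du-kr92, wade15}; another is to combine the Cartier--Foata normal-form basis of $A$ with a PBW counting argument over $\Q$ and over each $\Z_p$ to pin down the rank of $L_k$ and rule out torsion. Either route completes the injectivity step and therefore the whole proof.
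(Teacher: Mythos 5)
The paper does not prove Theorem~\ref{lielcfA} at all: it is quoted from \cite{pa-su06} and \cite{wade15}, and the introduction only remarks that it ``is proved by a similar method based on the Magnus map''. Your outline is precisely that method, so in spirit you are reproducing the argument of the cited references rather than diverging from anything in the paper. The individual steps are sound: surjectivity of $\varphi$ by generation in degree one, the Magnus-type homomorphism $a_i\mapsto 1+v_i$ into the completed partially commutative algebra $\hat A$, the identification $\mathcal U L\cong A$ via the fact that $\mathcal U$ is a left adjoint taking $\oplus$ to $\otimes$ (the same reasoning the paper uses before Proposition~\ref{ueraag}), and the reduction of injectivity of $\varphi$ to injectivity of $L\to\mathcal L\,\mathcal U L$, which by PBW requires $L$ to be $\Z$-free. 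The one place where your write-up is not a proof is exactly the place you flag: the $\Z$-freeness of each graded piece $L_k$ of the partially commutative Lie algebra. This is not a routine verification --- it is the combinatorial core of Duchamp--Krob and Wade (Lyndon/Hall bases adapted to the partial commutation, or the Cartier--Foata normal form plus a rank count over $\Q$ and each $\Z_p$) --- so as written your argument is a correct reduction to that lemma rather than a self-contained proof. If you carry out either of the two routes you name, the proof closes; nothing else in the outline needs repair.
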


The Lie algebra (over $\Z$) on the right hand side of~\eqref{graag} is known as the \emph{graph Lie algebra}, or \emph{partially commutative Lie algebra}. It is the graph product $\Z\langle v\rangle^\sK$ of trivial Lie algebras $\Z\langle v\rangle=\FL(v)$. Since~\eqref{graag} is $\Z$-free, it is the Lie algebra of primitive elements in 
$\mathcal U(\gr_\bullet\gamma(\raag_\sK))$~\cite[Theorem~III.5.4]{serr06}. Since the universal enveloping functor $\mathcal U\colon\lie\to\ass$ is left adjoint and preserves products, it also preserves graph products. Therefore, $\mathcal U(\gr_\bullet\gamma(\raag_\sK))$ is the algebra~\eqref{lscolim} for $R=\Z$. We obtain

\begin{proposition}\label{ueraag}
For a flag complex $\sK$, there is an isomorphism
\[
  \mathcal U(\gr_\bullet\gamma(\raag_\sK))\cong
  H_*(\varOmega(S^{2p+1})^\sK;\Z)
\]
of associative $\Z$-algebras, where $p\ge1$.
\end{proposition}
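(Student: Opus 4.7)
The plan is to chain together three ingredients, all of which have been prepared in the excerpt. First, invoke Theorem~\ref{lielcfA} to identify $\gr_\bullet\gamma(\raag_\sK)$ with the graph Lie algebra $\Z\langle v\rangle^\sK$, where $\Z\langle v\rangle=\FL(v)$ is the free Lie algebra on a single generator. The theorem additionally guarantees that every graded component is a free abelian group, which will be crucial in the second step.

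Second, I would argue that the universal enveloping functor $\mathcal U\colon\lie\to\ass$ preserves graph products. As a left adjoint to~$\mathcal L$, it preserves arbitrary colimits, so it carries the colimit on the right-hand side of~\eqref{mcprod} in~$\lie$ to the analogous colimit in~$\ass$. What remains is the monoidal compatibility $\mathcal U(L_1\oplus L_2)\cong\mathcal U(L_1)\otimes\mathcal U(L_2)$, a consequence of the Poincar\'e--Birkhoff--Witt theorem; over~$\Z$ this requires the summands to be $\Z$-free, which is precisely the property supplied by Theorem~\ref{lielcfA}. Applying $\mathcal U$ pointwise to the diagram $\mathcal C_\sK$ assembled from copies of~$\Z\langle v\rangle$, and using $\mathcal U(\FL(v))=\Z[v]$, therefore gives
\[
  \mathcal U\bigl(\gr_\bullet\gamma(\raag_\sK)\bigr)
  \cong\mathcal U\bigl(\Z\langle v\rangle^\sK\bigr)
  \cong\colim^\ass_{I\in\sK}\Z[v_i\colon i\in I]
  =\Z[v]^\sK.
\]

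Third, Theorem~\ref{loopsS} of Buchstaber--Gitler identifies $\Z[v]^\sK$ with $H_*(\varOmega(S^{2p+1})^\sK;\Z)$ as graded $\Z$-algebras, once each generator $v_i$ is placed in degree~$2p$. Composing the three identifications yields the stated isomorphism.

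The only point requiring genuine care is the monoidal compatibility of $\mathcal U$ over the integers: the isomorphism $\mathcal U(L_1\oplus L_2)\cong\mathcal U(L_1)\otimes\mathcal U(L_2)$ can fail for arbitrary Lie algebras over~$\Z$, so one must explicitly use the $\Z$-freeness from Theorem~\ref{lielcfA} to invoke the integral form of PBW. Everything else is formal, since $\mathcal U$, being left adjoint, automatically commutes with the colimit defining the graph product.
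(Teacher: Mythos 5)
Your argument is essentially the paper's own proof: identify $\gr_\bullet\gamma(\raag_\sK)$ with the graph product $\Z\langle v\rangle^\sK$ via Theorem~\ref{lielcfA}, observe that $\mathcal U$ preserves graph products because it is a left adjoint compatible with the monoidal structures, and then quote Theorem~\ref{loopsS}. The one quibble is that the step you single out as delicate is in fact formal --- $\mathcal U(L_1\oplus L_2)\cong\mathcal U(L_1)\otimes\mathcal U(L_2)$ holds for arbitrary Lie algebras over any commutative ring by comparing universal properties (a Lie map out of $L_1\oplus L_2$ is a pair of Lie maps with commuting images, just as an algebra map out of a tensor product is a pair of algebra maps with commuting images), so no appeal to an integral PBW theorem or to $\Z$-freeness is needed there; the $\Z$-freeness from Theorem~\ref{lielcfA} is used by the paper only for the additional remark about primitives.
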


To make the isomorphism of Proposition~\ref{ueraag} compatible with the grading of $H_*(\varOmega(S^{2p+1})^\sK;\Z)$ one needs to set $\deg v_i=2p$.

The lower central series of a right-angled Coxeter group $\racg_\sK$ is more subtle~\cite{vere22,ve-ra24}. One has $(\gamma_k(\racg_\sK))^2\subset\gamma_{k+1}(\racg_\sK)$, that is, the square of any element of $\gamma_k(\racg_\sK)$ lies in $\gamma_{k+1}(\racg_\sK)$, so
$\gr_\bullet\gamma(\racg_\sK)$ is a Lie algebra over~$\Z_2$. There is an epimorphism 
\[
  \FL_{\Z_2}(v_1,\ldots,v_m)\big/
  \bigl([v_i,v_j]=0\;
  \text{for }\{i,j\}\in\sK\bigr)\to
  \gr_\bullet\gamma(\racg_\sK)
\]
of Lie algebras over~$\Z_2$, where $\FL_{\Z_2}(v_1,\ldots,v_m)=\FL(v_1,\ldots,v_m)\otimes\Z_2$. This fails to be an isomorphism already when $\sK$ is two disjoint points and $\racg_\sK=\Z_2\star\Z_2$.

For right-angled Coxeter groups, the proper analogue of the isomorphism of Proposition~\ref{ueraag} involves the lower $2$-central series and the associated restricted Lie algebra.

\section{Restricted Lie algebras and $p$-central series}\label{secres}

Let $p$ be a prime and let $R$ be a field of characteristic $p$ or, more generally, a commutative algebra over~$\Z_p$. %(the latter denotes the field of~$p$ elements). 
A \emph{restricted Lie algebra} over $R$, or shortly a \emph{$p$-Lie algebra}, is a Lie algebra $L$ over $R$ equipped with a \emph{$p$-power} operation $x \mapsto x^{[p]}$ satisfying the following properties:
\begin{enumerate}
\item $[x, y^{[p]}] = 
[\ldots[[x,\underbrace{y],y], \ldots, y}_p]$  for $x,y\in L$;
\item $(\alpha x)^{[p]} = \alpha^px^{[p]}$ for $x\in L$ and
$\alpha \in R$;
\item 
$\displaystyle
  (x+y)^{[p]} = x^{[p]} + y^{[p]} + \sum_{i=1}^{p-1}s_i(x,y),
$\\
where $s_i(x,y)$ is defined by the expansion
\[
  [\ldots[[x,\underbrace{\lambda x+y],\lambda x+y], \ldots,\lambda x+y}_{p-1}]=\sum_{i=1}^{p-1}is_i(x,y)\lambda^{i-1}
\]
for $x,y\in L$ and $\lambda\in R$.
\end{enumerate}
For example, if $p=2$ then $(x+y)^{[2]}=x^{[2]}+y^{[2]}+[x,y]$, and if $p=3$ then 
$(x+y)^{[3]}=x^{[3]}+y^{[3]}+[[x,y],y]-[[x,y],x]$.
  
An associative algebra $A$ over $R$ is a restricted Lie algebra with the commutator bracket $[x,y]=xy-yx$ and the $p$-power operation $x\mapsto x^p$. This defines a functor $\mathcal L_p\colon\ass\to\lie_p$ to the category of restricted Lie algebras over~$R$. The left adjoint functor $\mathcal U_p\colon\lie_p\to\ass$ defines the \emph{restricted universal enveloping algebra} $\mathcal U_p L$ of $L$ together with the canonical homomorphism $L\to\mathcal L_p\mathcal U_pL$ of restricted Lie algebras. Explicitly, $\mathcal U_pL$ is the quotient of the free associatie algebra $TL$ by the two-sided ideal generated by the elements $x^p-x^{[p]}$ and $xy-yx-[x,y]$, $x,y\in L$. If $L$ is a free $R$-module (e.g., if $R$ is a field of characteristic~$p$), then $L\to\mathcal L_p\mathcal U_pL$ is injective~\cite[Theorem~V.12]{jaco79}.

\begin{lemma}[{\cite[Lemma~2.1]{quil68}}]\label{UnivAndLie}
Let $f\colon L_1 \rightarrow L_2$ be a homomorphism of $R$-free $p$-Lie algebras. Then, $f$ is surjective (injective) if and only if\, $\mathcal U_pf\colon \mathcal U_p L_1 \rightarrow \mathcal U_pL_2$ is surjective (injective).
\end{lemma}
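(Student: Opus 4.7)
The plan is to establish the two biconditionals separately, handling the easy implication in each pair first. Throughout I identify $L$ with its image in $\mathcal{U}_pL$ via the canonical injection recalled immediately before the lemma.

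The two easy implications are almost formal. If $f$ is surjective, then the image of $\mathcal{U}_pf$ contains $f(L_1)=L_2$, hence the subalgebra of $\mathcal{U}_pL_2$ generated by $L_2$, which is all of $\mathcal{U}_pL_2$. Conversely, if $\mathcal{U}_pf$ is injective, then its restriction to $L_1\subset\mathcal{U}_pL_1$ is injective; but this restriction is the composition of $f$ with the inclusion $L_2\hookrightarrow\mathcal{U}_pL_2$, so $f$ itself must be injective.

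For the converse of the surjectivity statement, I would use the augmentation ideal $I_k=\ker(\varepsilon\colon\mathcal{U}_pL_k\to R)$. Since $L_k\subset I_k$ generates $\mathcal{U}_pL_k$ as an $R$-algebra, the canonical map $L_k\to I_k/I_k^2$ is surjective, and the restricted Poincar\'e--Birkhoff--Witt theorem applied to an $R$-basis of the $R$-free algebra $L_k$ shows it is an isomorphism of $R$-modules. Since $\mathcal{U}_pf$ preserves the augmentations, it carries $I_1$ into $I_2$ and $I_1^2$ into $I_2^2$, and the induced map $I_1/I_1^2\to I_2/I_2^2$ is identified with $f$ under these isomorphisms. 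Thus surjectivity of $\mathcal{U}_pf$ forces surjectivity of $f$.

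For the converse of the injectivity statement, restricted PBW is again the main tool. Choose an $R$-basis $\{x_i\}_{i\in I}$ of $L_1$, extend $\{f(x_i)\}_{i\in I}$ to an $R$-basis $\{f(x_i)\}_{i\in I}\cup\{y_j\}_{j\in J}$ of $L_2$ (possible under the injectivity hypothesis provided $L_2/f(L_1)$ is $R$-projective, which holds automatically when $R$ is a field), and fix a total order on $I\sqcup J$ extending the order on $I$. By restricted PBW, $\mathcal{U}_pL_1$ has an $R$-basis of ordered monomials in $\{x_i\}$ with each exponent in $\{0,\ldots,p-1\}$, and $\mathcal{U}_pL_2$ has the analogous basis in $\{f(x_i)\}\cup\{y_j\}$. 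The map $\mathcal{U}_pf$ carries the first basis to a subset of the second, hence is injective.

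The main obstacle is the basis-extension step in the last paragraph: over a general commutative $\mathbb{Z}_p$-algebra $R$ one needs $L_2/f(L_1)$ to be projective (or at least flat) to apply restricted PBW compatibly, which is why a field of characteristic $p$ is the primary setting. If one assumes only that $L_1$ and $L_2$ are $R$-free, the argument can instead be carried out at the level of the associated graded for the augmentation-ideal filtration on $\mathcal{U}_pL_k$, using its functorial description as a restricted symmetric-type algebra on $L_k$, which reduces both claims to the corresponding statements for a symmetric-algebra map and hence to $R$-freeness of $L_1$ and $L_2$.
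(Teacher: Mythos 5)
The paper offers no proof of this lemma --- it is quoted from Quillen, where both nontrivial implications rest on the restricted Poincar\'e--Birkhoff--Witt theorem. Measured against that, three of your four implications are fine: the two formal directions are correct, and your basis-extension argument for ``$f$ injective $\Rightarrow\mathcal U_pf$ injective'' is the standard one, with the caveat about $L_2/f(L_1)$ over a general ring correctly flagged.

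The genuine gap is in ``$\mathcal U_pf$ surjective $\Rightarrow f$ surjective''. The claimed isomorphism $L_k\cong I_k/I_k^2$ is false: in $\mathcal U_pL$ one has $xy-yx=[x,y]$ and $x^p=x^{[p]}$, so every bracket and every $p$-th power of an element of $L$ lies in $I^2$, and $I/I^2$ is the module of indecomposables of $L$ (the quotient of $L$ by the submodule generated by all $[x,y]$ and all $x^{[p]}$), not $L$ itself. Concretely, for $L=\FL_p(u)$ one has $\mathcal U_pL\cong R[u]$ and $I/I^2$ is free of rank one, whereas $L$ has basis $u,u^{[p]},u^{[p^2]},\ldots$ by Proposition~\ref{basrl}; the restricted PBW theorem describes the associated graded of the filtration by word length in $L$, which is a different filtration from the $I$-adic one precisely because $x^p=x^{[p]}$ drops length. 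Worse, no argument retaining only the induced map $I_1/I_1^2\to I_2/I_2^2$ can close this direction: the homomorphism $\FL_p(t)\to\FL_p(u)$, $t\mapsto u+u^{[p]}$, induces $R[t]\to R[u]$, $t\mapsto u+u^p$, which is bijective on $I/I^2$ yet is not surjective (nor is $f$). Since $\mathcal U_pL$ is neither graded nor $I$-adically complete, the Nakayama-type principle ``surjective on $I/I^2$ implies surjective'' is unavailable. The standard repair is again restricted PBW, applied to the pair $f(L_1)\subset L_2$: extending a basis of the restricted subalgebra $f(L_1)$ to a basis of $L_2$, the image of $\mathcal U_pf$ is spanned by the PBW monomials involving only the $f(L_1)$-basis vectors, hence is a proper subalgebra whenever $f(L_1)\neq L_2$. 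Your closing sketch via an associated graded points in a workable direction, but it must use the length filtration rather than the augmentation-ideal filtration, and one still has to check that surjectivity of $\mathcal U_pf$ descends to that associated graded, which is not automatic.
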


Given a set $S$, the \emph{free $p$-Lie algebra} $\FL_p(x_s\colon s\in S)$ generated by~$S$ is defined by the universal property: any set map $S\to L$ to a $p$-Lie algebra $L$ extends uniquely to a homomorphism  $\FL_p(x_s\colon s\in S)\to L$ of $p$-Lie algebras.

\begin{proposition}[{\cite[\S2.7.1]{bakh85}}]\label{basrl}
If $W$ is an $R$-basis of the free Lie algebra $\FL(x_s\colon s\in S)$, then $\{w^{p^i}\colon w\in W,\,i=0,1,\ldots\}$ is an $R$-basis of $\FL_p(x_s\colon s\in S)$.
\end{proposition}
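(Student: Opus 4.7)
The plan is to identify $B=\{w^{[p]^i}\colon w\in W,\,i\ge 0\}$ as an $R$-basis of $\FL_p(x_s\colon s\in S)$ by establishing spanning and $R$-linear independence separately. Write $\FL$ for $\FL(x_s\colon s\in S)$ and $\FL_p$ for $\FL_p(x_s\colon s\in S)$; the universal property of $\FL$ furnishes a canonical Lie algebra map $\FL\to\FL_p$ that is the identity on generators, so $W$ embeds into $\FL_p$.

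For spanning, let $L'\subseteq\FL_p$ denote the $R$-submodule generated by $B$. Since $S\subset\FL\subseteq L'$, it suffices to prove that $L'$ is closed under the bracket and under the $p$-power operation; the universal property then forces $L'=\FL_p$. Axiom~(1) of a $p$-Lie algebra reads $[x,y^{[p]}]=(-1)^p\operatorname{ad}(y)^p(x)$, whence $\operatorname{ad}(y^{[p]})=(-1)^{p+1}\operatorname{ad}(y)^p=\operatorname{ad}(y)^p$ in characteristic~$p$; iterating gives $\operatorname{ad}(y^{[p]^k})=\operatorname{ad}(y)^{p^k}$. Consequently, for $w,w'\in W$,
\[
  [w^{[p]^i},w'^{[p]^j}]=\operatorname{ad}(w)^{p^i}(w'^{[p]^j}),
\]
and the innermost step $[w,w'^{[p]^j}]=-\operatorname{ad}(w')^{p^j}(w)$ already lies in $\FL$, so repeated applications of $\operatorname{ad}(w)$ stay inside the Lie subalgebra~$\FL\subseteq L'$. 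Closure under the $p$-power follows from the many-variable form of axiom~(3), obtained by iterating the two-variable Jacobson formula:
\[
  \Bigl(\sum_\beta\alpha_\beta b_\beta\Bigr)^{[p]}=\sum_\beta\alpha_\beta^p\,b_\beta^{[p]}+\Lambda\bigl(\{b_\beta\}\bigr),
\]
where each $b_\beta^{[p]}=w_\beta^{[p]^{i_\beta+1}}\in B$ and $\Lambda$ is a Lie polynomial in the $b_\beta$, hence in $L'$ by the bracket closure just established.

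For linear independence, equip the tensor algebra $T(S)$ over $R$ with its restricted Lie structure (commutator bracket and associative $p$-th power); the universal property of $\FL_p$ yields a restricted Lie homomorphism $\phi\colon\FL_p\to\mathcal L_p T(S)$. Since $y^{[p]}=y^p$ in $T(S)$, we have $\phi(w^{[p]^i})=w^{p^i}$, where $w$ is viewed via the classical injection $\FL\hookrightarrow T(S)=\mathcal U\FL$. The Poincar\'e--Birkhoff--Witt theorem, applied to $\mathcal U\FL$ with the ordered basis~$W$, exhibits the monomials $\{w^n\colon w\in W,\,n\ge1\}$ as part of an $R$-basis of $T(S)$; in particular the subfamily $\phi(B)=\{w^{p^i}\colon w\in W,\,i\ge 0\}$ is $R$-linearly independent, so any $R$-linear relation among elements of $B$ in $\FL_p$ pushes forward to the trivial relation in $T(S)$, forcing every coefficient to vanish.

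The most delicate step is the bracket-closure computation: one must control the iterated identity $\operatorname{ad}(y^{[p]^k})=\operatorname{ad}(y)^{p^k}$ and verify that nested brackets of $p$-powered Lie words from $W$ collapse back into $\FL$. Granting this, the remaining closure property follows from the iterated Jacobson formula, and the linear-independence argument reduces to a routine appeal to classical PBW.
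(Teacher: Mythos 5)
The paper does not prove this proposition at all: it is quoted from Bakhturin \cite[\S2.7.1]{bakh85}, so there is no internal argument to compare yours against. Your proof is correct and self-contained, and it follows what is essentially the standard route. The spanning half is sound: the identity $\operatorname{ad}(y^{[p]})=(-1)^{p+1}\operatorname{ad}(y)^p=\operatorname{ad}(y)^p$ in characteristic $p$ is exactly what axiom (1) gives, your observation that $[w^{[p]^i},w'^{[p]^j}]$ collapses into (the image of) $\FL$ after the innermost step is right, and closure under $x\mapsto x^{[p]}$ via axioms (2) and (3) iterated over a finite sum reduces to the bracket closure already established; since $\FL_p$ is generated by $S$ as a restricted Lie algebra, the span of $B$ is everything. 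The independence half correctly pushes forward along the canonical restricted homomorphism $\phi\colon\FL_p\to\mathcal L_pT(x_s\colon s\in S)$, where $w^{[p]^i}\mapsto w^{p^i}$, and invokes Poincar\'e--Birkhoff--Witt for $\mathcal U\FL=T(x_s\colon s\in S)$ with the free $R$-module basis $W$ (valid over any commutative ring once $\FL$ is $R$-free, which is the hypothesis); the pure powers $w^{p^i}$ are distinct PBW monomials, so $\phi(B)$ is independent and $\phi$ is injective on $B$. Two cosmetic remarks: the claim that ``$W$ embeds into $\FL_p$'' is only justified a posteriori by the independence argument, but your spanning argument uses only the image of $\FL$, so there is no circularity; and this $\phi$ is the same map the paper uses immediately afterwards to identify $\FL_p(x_s\colon s\in S)$ with $\mathcal PT(x_s\colon s\in S)$, so your proof dovetails with how the proposition is actually applied.
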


The restricted universal enveloping algebra of a $p$-Lie algebra is a Hopf algebra~\cite{mi-mo65}.
Let $\mathcal P\colon\hpf\to\lie_p$ denote the primitive elements functor from the category of Hopf algebras over~$R$.

\begin{proposition} $\mathcal U_p\FL_p(x_s\colon s\in S)\cong T(x_s\colon s\in S)$ and $\FL_p(x_s\colon s\in S)\cong\mathcal PT(x_s\colon s\in S)$.
\end{proposition}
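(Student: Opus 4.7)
The plan is to deduce both statements from a formal adjointness argument for the first and a restricted Milnor--Moore-type identification of primitives for the second.

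For $\mathcal U_p\FL_p(x_s\colon s\in S)\cong T(x_s\colon s\in S)$ I would chain the two adjunctions
$$\mathrm{Set}\xrightarrow{\FL_p}\lie_p\xrightarrow{\mathcal U_p}\ass,$$
each of which is left adjoint to the corresponding forgetful functor. Their composite $\mathcal U_p\circ\FL_p$ is therefore left adjoint to the underlying-set functor $\ass\to\mathrm{Set}$, as is the free associative algebra functor~$T$; by uniqueness of left adjoints the two functors agree naturally in~$S$, sending each generator $x_s$ to itself. Since on both sides the Hopf algebra structure is determined by declaring the $x_s$ primitive, this is automatically an isomorphism of Hopf algebras.

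For $\FL_p(x_s\colon s\in S)\cong\mathcal PT(x_s\colon s\in S)$ I would analyse the canonical comparison map. The unit of adjunction $\FL_p(x_s)\to\mathcal L_p\mathcal U_p\FL_p(x_s)\cong\mathcal L_p T(x_s)$ takes values in the primitives, since $\mathcal PT(x_s)$ is a restricted Lie subalgebra containing each generator; this yields $\alpha\colon\FL_p(x_s)\to\mathcal PT(x_s)$. The domain is $R$-free by Proposition~\ref{basrl}, so the injectivity of the unit recalled just before the proposition makes $\alpha$ injective. For surjectivity I would invoke the restricted Milnor--Moore identification: for any $R$-free restricted Lie algebra $L$, the unit $L\to\mathcal P\mathcal U_pL$ is an isomorphism. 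Applied to $L=\FL_p(x_s)$ and combined with the first part, this yields the second isomorphism.

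The main obstacle is the surjectivity step, i.e., verifying that every primitive of $T(x_s)$ is a $p$-Lie expression in the $x_s$. Unpacked, this amounts to showing that the algebra map $\beta\colon\mathcal U_p\mathcal PT(x_s)\to T(x_s)$ induced by the inclusion of primitives is an isomorphism. Since $\beta\circ\mathcal U_p\alpha$ recovers the isomorphism of the first part, once $\beta$ is established as an isomorphism, Lemma~\ref{UnivAndLie} converts $\mathcal U_p\alpha$, and hence $\alpha$, into an isomorphism. Establishing $\beta$ is the content of the restricted Milnor--Moore theorem applied to the connected cocommutative Hopf algebra $T(x_s)$; it is the one input genuinely beyond the formal category-theoretic manipulations.
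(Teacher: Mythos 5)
Your proposal is correct, and its two halves relate differently to the paper's proof. For the first isomorphism you argue exactly as the paper does: the paper verifies directly that $\mathcal U_p\FL_p(x_s\colon s\in S)$ has the universal property of the free associative algebra, which is just the hands-on form of your ``composite of left adjoints is left adjoint to the composite of forgetful functors'' argument. For the second isomorphism you take a genuinely different route. The paper never mentions injectivity or surjectivity of the comparison map: it observes that $\mathcal U\FL(x_s\colon s\in S)=T(x_s\colon s\in S)$, quotes Serre's description of $\mathcal P T(x_s\colon s\in S)$ as the free module on $\{w^{p^i}\}$ for $W$ a basis of the (unrestricted) free Lie algebra, and matches this basis against the basis of $\FL_p(x_s\colon s\in S)$ supplied by Proposition~\ref{basrl}; the isomorphism is then a comparison of two explicit free $R$-modules. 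You instead prove injectivity from $R$-freeness plus the injectivity of the unit $L\to\mathcal L_p\mathcal U_pL$, and outsource surjectivity to the restricted Milnor--Moore theorem $\mathcal P\mathcal U_pL\cong L$. That is legitimate --- the paper itself records this general fact, with the citation to Milnor--Moore, immediately \emph{after} the proposition --- but it inverts the logical order: the paper derives the special case elementarily and then states the general theorem as a strengthening, whereas you derive the special case from the general theorem. Your route buys brevity and no basis bookkeeping at the cost of importing a substantial external result (and of implicitly working over a field, or at least checking that $\mathcal PT(x_s\colon s\in S)$ is $R$-free so that Lemma~\ref{UnivAndLie} applies in your reformulation via $\beta$); the paper's route is self-contained up to Serre's exercise and works verbatim over any commutative $\Z_p$-algebra $R$.
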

\begin{proof}
The first statement is a formal corollary of universal properties. Namely, a set map $S\to A$ to an associative algebra $A$ extends uniquely to a homomorphism $\FL_p(x_s\colon s\in S)\to\mathcal L_p A$  of $p$-Lie algebras. The latter extends uniquely to a homomorphism $\mathcal U_p\FL_p(x_s\colon s\in S)\to A$ of associative algebras. Hence, $\mathcal U_p\FL_p(x_s\colon s\in S)$ has the universal property of a free object in $\ass$, so it is canonically isomorphic to $T(x_s\colon s\in S)$.

For the second statement, we have $\mathcal U\FL(x_s\colon s\in S)=T(x_s\colon s\in S)$. Let $W$ be an $R$-basis of  $\FL(x_s\colon s\in S)$; it follows from~\cite[Theorem~III.5.4, Exercise~2]{serr06} that $\mathcal PT(x_s\colon s\in S)$ has $R$-basis 
$\{w^{p^i}\colon w\in W,\,i=0,1,\ldots\}$. The latter is also an $R$-basis of $\FL_p(x_s\colon s\in S)$ by Proposition~\ref{basrl}, so the result follows.
\end{proof}

More generally, $\mathcal P\mathcal U_pL=L$ for an $R$-free $p$-Lie algebra~$L$~\cite[Theorem~6.11]{mi-mo65}.

We refer to the monographs~\cite{jaco79} and~\cite{bakh85} for more details on $p$-Lie algebras. 

\smallskip

A \emph{$p$-central series} or a \emph{$p$-central filtration}~\cite{laza54,quil68} on a group $G$ is a central series $\varGamma^{[p]}(G) = \{ \varGamma^{[p]}_k(G)\colon k\ge 1\}$ such that if $g\in \varGamma^{[p]}_k(G)$, then $g^p\in \varGamma_{pk}^{[p]}(G)$ for all~$k$.
%(shortly, $(\varGamma^{[p]}_k(G))^p\subset \varGamma^{[p]}_{kp}(G)$).
Each quotient $\varGamma^{[p]}_k(G)/\varGamma^{[p]}_{k+1}(G)$ is an elementary abelian $p$-group (a $\Z_p$-module).

The \emph{lower $p$-central series} is the fastest descending $p$-central series, that is, the $p$-central series $\gamma^{[p]}(G)=\{\gamma^{[p]}_k(G)\colon k\ge 1\}$ with the property that $\gamma^{[p]}_k(G)\subset \varGamma^{[p]}_k(G)$ for any $p$-central series $\varGamma^{[p]}(G)$ and $k\ge 1$. It is given by $\gamma^{[p]}_1(G)=G$ and $\gamma^{[p]}_k(G) = \bigl(\gamma^{[p]}_{k-1}(G), G\bigr)\bigl(\gamma^{[p]}_{\lceil k/p\rceil}\bigr)^p$ for $k\ge2$, or, more explicitly,
\[
  \gamma^{[p]}_k(G) =\prod_{mp^i\ge k}\bigl(\gamma_m(G)\bigr)^{p^i}.
\]

The graded $\Z_p$-module
\[
  \gr_\bullet\varGamma^{[p]}(G)
  =\bigoplus_{k\ge1}
  \varGamma^{[p]}_k(G)/\varGamma^{[p]}_{k+1}(G)
\]
associated with a $p$-central series $\varGamma^{[p]}(G)$ is a restricted Lie algebra over~$\Z_p$ with the bracket and $p$-power defined by
\[
  [\overline g_k,\overline g_\ell]
  =\overline{(g_k,g_\ell)},\quad (\overline g_k)^{[p]}=
  \overline{g_k^p}
\]
for $g_k\in\varGamma^{[p]}_k(G)$, $g_\ell\in\varGamma^{[p]}_\ell(G)$.

\section{Group algebras}
Given a commutative unitary ring $R$ and a group $G$, the \emph{group algebra} $R[G]$ is the associative algebra
whose underlying $R$-module is the the free module with basis $\{g\in G\}$ and multiplication is given on basis elements by the group operation. The group algebra functor $\grp\to\ass$ is left adjoint to the group of units functor $\ass\to\grp$:
\begin{equation}\label{adjga}
  \Hom_{\ass}(R[G],A)=\Hom_{\grp}(G,A^\times),
\end{equation}
where $A^\times$ denotes the group of invertible elements in an associative algebra~$A$. This defines the universal property of the group algebra.

\begin{proposition}\label{gagp}
The group algebra functor $\grp\to\ass$ preserves graph products, that is, $R[\mb G^\sK]\cong(R[\mb G])^\sK$.
\end{proposition}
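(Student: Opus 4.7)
The plan is to exploit the adjunction~\eqref{adjga}: the group algebra functor $R[-]\colon\grp\to\ass$ is a left adjoint, hence preserves all small colimits. Since the polyhedral product in each of $\grp$ and $\ass$ is defined as a colimit over the face category $\catK$ via~\eqref{mcprod}, it will suffice to identify the $\catK$-diagrams in $\ass$ obtained by applying $R[-]$ to $\mathcal C_\sK(\mb G)$ with the diagram $\mathcal C_\sK(R[\mb G])$.

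First I would record the classical identification $R[G_1\times\cdots\times G_k]\cong R[G_1]\otimes_R\cdots\otimes_R R[G_k]$ of associative $R$-algebras, which is easily verified either directly on bases $\{g\}\otimes\{h\}\leftrightarrow\{(g,h)\}$ or by checking the universal property: an $R$-algebra map out of the left side corresponds to a group homomorphism $G_1\times\cdots\times G_k\to A^\times$, which in turn corresponds to a collection of pairwise commuting group homomorphisms $G_i\to A^\times$, which matches the universal property of the tensor product in $\ass$. In particular, for every $I\in\sK$ one gets a natural isomorphism
\[
  R[\mb G^I]=R\Bigl[\prod_{i\in I}G_i\Bigr]\cong\bigotimes_{i\in I}R[G_i]=(R[\mb G])^I.
\]

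Next I would check that these isomorphisms are natural with respect to the morphisms $\mb G^I\to\mb G^J$ ($I\subset J$) in $\grp$, which are induced by inserting the unit element of $G_j$ for $j\in J\setminus I$; under $R[-]$ this translates to inserting the unit of $R[G_j]$, which is precisely the canonical morphism $(R[\mb G])^I\to(R[\mb G])^J$ in $\ass$ defined via the unit object $R$. Thus $R[-]$ carries the diagram $\mathcal C_\sK(\mb G)\colon\catK\to\grp$ to the diagram $\mathcal C_\sK(R[\mb G])\colon\catK\to\ass$.

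Combining these observations and using that $R[-]$ preserves colimits by~\eqref{adjga}, we obtain
\[
  R[\mb G^\sK]=R\bigl[\colim^\grp_{I\in\sK}\mb G^I\bigr]
  \cong\colim^\ass_{I\in\sK}R[\mb G^I]
  \cong\colim^\ass_{I\in\sK}(R[\mb G])^I
  =(R[\mb G])^\sK.
\]
The main (minor) obstacle is the bookkeeping in the middle paragraph: one has to verify that the natural isomorphisms $R[\mb G^I]\cong(R[\mb G])^I$ intertwine the two types of structural morphisms, but this is straightforward once the monoidal structures (cartesian in $\grp$, tensor in $\ass$) and their units (trivial group, ring $R$) are correctly matched.
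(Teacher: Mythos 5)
Your proposal is correct and follows exactly the paper's argument: the paper's own (one-line) proof says that the group algebra functor preserves products and, being a left adjoint, preserves colimits, which is precisely your strategy. You merely spell out the product identification $R[G_1\times\cdots\times G_k]\cong R[G_1]\otimes_R\cdots\otimes_R R[G_k]$ and the naturality bookkeeping that the paper leaves implicit.
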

\begin{proof}
The group algebra functor preserves products and, since it is left adjoint, it preserves colimits.
\end{proof}

\begin{proposition}
Suppose a group $G$ is given by generators and relations:
\[
  G = F(a_i\colon i\in I)/(r_j=1\;\emph{for }j\in J).
\]
Then
\[
  R[G]\cong T(v_i,v_i^{-1}\colon i\in I)/(v_iv_i^{-1}=1 \;\emph{for } i\in I,\;R_j=1\;\emph{for } j\in J),
\]
where $R_j$ is the free monomial in $v_i,v_i^{-1}$ corresponding to the word~$r_j$.
\end{proposition}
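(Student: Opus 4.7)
The plan is to verify that $R[G]$ and the algebra $A:=T(v_i,v_i^{-1}\colon i\in I)/(v_iv_i^{-1}=1,\;R_j=1)$ represent the same functor $\ass\to\mathit{Set}$, and then invoke Yoneda. A direct approach constructing mutually inverse maps works too, but the universal-property path makes transparent where each relation is used. I should first flag one small bookkeeping point: for the adjunction~\eqref{adjga} to give invertibility on the nose, I want $v_i^{-1}v_i=1$ to hold in $A$ as well. Either this is understood as part of the presentation alongside $v_iv_i^{-1}=1$, or one derives it from the fact that the group relations $r_j$ coming from a group presentation force $a_i^{-1}a_i=1$ in $G$; I would remark on this at the start.

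Next I would analyze $\Hom_\ass(A,B)$ for an arbitrary associative $R$-algebra~$B$. By the universal property of the free algebra $T(v_i,v_i^{-1}\colon i\in I)$, a homomorphism $A\to B$ is the same as a pair of families $(b_i),(b_i')$ in $B$ satisfying $b_ib_i'=1$, $b_i'b_i=1$, and $R_j(b_i,b_i')=1$ for all $j$. The first two equations say $b_i\in B^\times$ with $b_i^{-1}=b_i'$, so such data is simply a tuple $(b_i)_{i\in I}$ of invertible elements of $B$ satisfying $r_j(b_i)=1$ for all $j\in J$, where $r_j$ is evaluated in $B^\times$.

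Then I would use the universal property of the group presentation $G=F(a_i)/(r_j=1)$: such tuples are in natural bijection with group homomorphisms $G\to B^\times$, via $a_i\mapsto b_i$. Finally, the adjunction~\eqref{adjga} identifies $\Hom_\grp(G,B^\times)$ with $\Hom_\ass(R[G],B)$. Composing these natural bijections,
\[
  \Hom_\ass(A,B)\;\cong\;\Hom_\grp(G,B^\times)\;\cong\;\Hom_\ass(R[G],B),
\]
which by the Yoneda lemma yields a canonical isomorphism $A\cong R[G]$; unwinding the construction shows it sends $v_i$ to $a_i\in R[G]$ and $v_i^{-1}$ to $a_i^{-1}$.

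The main obstacle is essentially cosmetic rather than mathematical: making sure the presentation of $A$ really does force both one-sided inverses of $v_i$. If only $v_iv_i^{-1}=1$ is imposed, then in a noncommutative free setting $v_i^{-1}$ is merely a right inverse, and the adjunction argument above fails at the step where we want $b_i\in B^\times$. I would therefore either read the relations in the statement symmetrically (both $v_iv_i^{-1}=1$ and $v_i^{-1}v_i=1$), or observe that among the group relations $r_j$ one may include the tautological $a_i^{-1}a_i=1$, whose image $R_j$ supplies the missing equation. Everything else is formal and follows from the universal properties involved.
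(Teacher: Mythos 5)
Your argument is essentially the paper's own proof: both verify that the presented algebra satisfies the universal property of $R[G]$ supplied by the adjunction~\eqref{adjga}, yours phrased via representable functors and Yoneda, the paper's by directly exhibiting the unique extension $F\colon V\to A$ of a given $f\colon G\to A^\times$. Your side remark about the one-sidedness of $v_iv_i^{-1}=1$ is well taken and is a point the paper's proof passes over: with only a right inverse imposed the quotient is strictly larger than $R[G]$ (already for $G=\Z$ one gets $T(v,w)/(vw=1)$, in which $wv$ is a nontrivial idempotent, rather than the Laurent ring), and the comparison map to $\Hom_{\grp}(G,A^\times)$ is not even defined on all of $\Hom_{\ass}(V,A)$ --- so the relations must indeed be read symmetrically, as you propose (a non-issue in the paper's applications, where $v_i^p=1$ makes $v_i$ invertible outright).
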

\begin{proof}
Let $V$ denote the associative algebra on the right hand side of the isomorphism being proved. We need to check that $V$ satisfies the universal property given by the adjunction~\eqref{adjga}. There is a homomorphism $i\colon G\to V^\times$ defined on the generators by~$i(a_i)=v_i$. We need to show that for any homomorphism $f\colon G\to A^\times$ from $G$ to the group of units of an algebra~$A$ there is a unique homomorphism $F\colon V\to A$ such that $F^\times\circ i=f$. The latter identity implies that $F(v_i)=f(a_i)$ and $F(v_i^{-1})=(f(a_i))^{-1}$, which defines the homomorphism $F$ uniquely.
\end{proof}

\begin{example}\label{Z2RCK}
Applying either of the two propositions above to the right-angled Coxeter group~\eqref{racgdef} we obtain
\[
  \Z_2[\RCK] \cong T_{\Z_2}(v_1,\ldots,v_m)/ 
  (v_i^2=1,\;v_iv_jv_iv_j=1
  \text{ for }\{i,j\}\in\sK).
\]  
Note that the relation $v_iv_jv_iv_j=1$ is equivalent to $v_iv_j+v_jv_i=0$.
\end{example}

The augmentation homomorphism $\varepsilon\colon R[G]\to R$ is given by $\varepsilon(\sum_i k_ig_i) = \sum_i k_i$, where $k_i\in R$ and $g_i\in G$. Let $\overline{R[G]}=\Ker\varepsilon$ be the augmentation ideal, and let 
\[
  \gr_\bullet(R[G]) = \bigoplus_{n\ge 1} (\overline{R[G]})^n /(\overline{R[G]})^{n+1}
\]
be the associated graded ring for the $\overline{R[G]}$-adic filtration.

\begin{theorem}[\cite{quil68}] 
%\cite[Th. VIII.5.2]{passi1979group}}]
\label{UnivLCS}    
If $R$ is a field of characteristic~$p$, then there is an isomorphism 
\[
  \mathcal U_p\bigl(\gr_\bullet\gamma^{[p]}(G)\bigr)\otimes_\Z R\cong \gr_\bullet(R[G])
\]
of graded $R$-algebras.
\end{theorem}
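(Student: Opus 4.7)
The plan is to define a map $g \mapsto \overline{g-1}$ from $G$ into the associated graded of the augmentation filtration, verify that it descends to a homomorphism of restricted Lie algebras on $\gr_\bullet\gamma^{[p]}(G)$, extend by adjunction to the universal enveloping algebra, and finally prove that the resulting map of graded $R$-algebras is an isomorphism.

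First I would introduce the \emph{dimension subgroup filtration} $D_k(G) = \{g \in G : g - 1 \in (\overline{R[G]})^k\}$. The elementary identities
\[
  gh - 1 = (g-1) + (h-1) + (g-1)(h-1),
\]
\[
  (g,h) - 1 = g^{-1}h^{-1}\bigl((g-1)(h-1) - (h-1)(g-1)\bigr),
\]
together with the equality $(g-1)^p = g^p - 1$ in $R[G]$ (valid since $R$ has characteristic~$p$, because the binomial correction term carries a factor of $p$) show that $\{D_k(G)\}$ is itself a $p$-central series on $G$. By the minimality property of $\gamma^{[p]}$, this forces $\gamma^{[p]}_k(G) \subseteq D_k(G)$ for every $k$, and the assignment $\overline g \mapsto \overline{g-1}$ therefore gives a well-defined $\Z_p$-linear map $\gr_\bullet\gamma^{[p]}(G) \to \gr_\bullet(R[G])$. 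Reading the same three identities on the associated gradeds shows that this map preserves the Lie bracket and the $p$-power operation, so it is a homomorphism of restricted Lie algebras. By the adjunction defining $\mathcal U_p$ it extends uniquely to a homomorphism
\[
  \Phi \colon \mathcal U_p\bigl(\gr_\bullet\gamma^{[p]}(G)\bigr) \otimes_\Z R \longrightarrow \gr_\bullet(R[G])
\]
of graded $R$-algebras, and $\Phi$ is the candidate isomorphism.

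Surjectivity is essentially immediate: the augmentation ideal $\overline{R[G]}$ is generated as a left $R$-module by $\{g-1 : g \in G\}$, so $\gr_\bullet(R[G])$ is generated as an $R$-algebra by the degree-one classes $\overline{g-1}$, each of which lies in the image of~$\Phi$.

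The hard part is injectivity. This amounts to the Jennings--Lazard equality $D_k(G) = \gamma^{[p]}_k(G)$ together with a restricted Poincar\'e--Birkhoff--Witt dimension count in each degree. Following Quillen, I would organise the argument as a functorial reduction to the free case. For $G = F$ a free group, the Magnus embedding $R[F] \hookrightarrow R\langle\langle x_1, \ldots, x_m\rangle\rangle$, $a_i \mapsto 1 + x_i$, converts the augmentation filtration into the degree filtration and identifies $\gr_\bullet(R[F])$ with the free associative algebra $T(x_1, \ldots, x_m)$. Combined with the identification of $\gr_\bullet\gamma^{[p]}(F)$ with the free restricted Lie algebra over $\Z_p$ (a Magnus-type computation for the lower $p$-central series of a free group), the universal properties on both sides make $\Phi$ an isomorphism in the free case. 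For a general presentation $G = F/N$, both constructions are compatible with passing to the quotient in a controlled way, and a diagram chase on the induced short exact sequences of filtered objects deduces the general case from the free one. Verifying this compatibility --- equivalently, establishing the Jennings--Lazard identification of the dimension subgroups with $\gamma^{[p]}_k(G)$ --- is where the technical content of the proof is concentrated.
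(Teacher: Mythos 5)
The paper does not prove this statement at all: Theorem~\ref{UnivLCS} is imported verbatim from Quillen~\cite{quil68}, so there is no in-paper proof to compare yours against. I will therefore assess your sketch on its own terms.

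The first half of your proposal is correct and complete. The identities $gh-1=(g-1)+(h-1)+(g-1)(h-1)$, $(g,h)-1=g^{-1}h^{-1}\bigl((g-1)(h-1)-(h-1)(g-1)\bigr)$ and $(g-1)^p=g^p-1$ (valid in characteristic $p$ since $g$ commutes with $1$) do show that the dimension subgroups $D_k(G)$ form a $p$-central series, that $\gamma^{[p]}_k(G)\subseteq D_k(G)$, that $\overline g\mapsto\overline{g-1}$ is a well-defined homomorphism of restricted Lie algebras $\gr_\bullet\gamma^{[p]}(G)\to\mathcal L_p\bigl(\gr_\bullet(R[G])\bigr)$, and hence that $\Phi$ exists and is surjective (the target is generated in degree one by the classes $\overline{g-1}$).

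The gap is in injectivity, and it is concentrated exactly where you place it but is not resolved by what you write. Two specific problems. First, the reduction ``for a general presentation $G=F/N$ both constructions are compatible with passing to the quotient in a controlled way'' is not true in the naive sense needed for a diagram chase: writing $R[G]=R[F]/\mathfrak n$ with $\mathfrak n$ the ideal generated by $\{n-1: n\in N\}$, the kernel of $\gr_\bullet(R[F])\to\gr_\bullet(R[G])$ is the ideal of \emph{all} initial forms of elements of $\mathfrak n$, which in general strictly contains the ideal generated by the initial forms of the chosen generators; the analogous statement on the Lie side is equally delicate. Identifying these two kernels is the entire content of the theorem, not a formal compatibility. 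Second, ``a restricted Poincar\'e--Birkhoff--Witt dimension count'' does not literally apply, since for infinite $G$ the graded pieces need not be finite-dimensional; what replaces it in Quillen's argument is Hopf-algebra structure theory: $\gr_\bullet(R[G])$ is a primitively generated cocommutative Hopf algebra, hence by Milnor--Moore it equals $\mathcal U_p$ of its restricted Lie algebra of primitives, and the theorem reduces (via Lemma~\ref{UnivAndLie}) to showing that $\gr_\bullet\gamma^{[p]}(G)\otimes_{\Z}R\to\mathcal P\gr_\bullet(R[G])$ is bijective --- which is where the Jennings--Lazard identification $D_k(G)=\gamma^{[p]}_k(G)$ genuinely enters. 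Your sketch names the right ingredients (Magnus embedding for free groups, Jennings--Lazard) but, as written, the passage from the free case to the general case would fail.
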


\section{Lower $p$-central series of graph products of elementary abelian $p$-groups}

Here we consider the right-angled Coxeter group $\racg_\sK=\Z_2^\sK$ and, more generaly, the graph product of groups~$\Z_p$:
\[
  \Z_p^\sK=\colim^\grp_{I\in\sK}\Z_p^I=F(a_1,\ldots,a_m)\big/ \bigl(a_i^p=1,\;a_ia_j=a_ja_i
  \text{ for }\{i,j\}\in\sK\bigr).
\]

The restricted Lie algebra associated with the lower $p$-central series of~$\Z_p^\sK$ is described next:

\begin{theorem}\label{rlazpk}
For any simplicial complex $\sK$ and prime $p$, there is an isomorphism
\[
  \gr_\bullet\gamma^{[p]}\bigl(\Z_p^\sK\bigr)\cong
  \FL_p(u_1,\ldots,u_m)\big/\bigl(u_i^{[p]}=0,\; 
  [u_i,u_j]=0\emph{ for }\{i,j\}\in\sK\bigr)
\]
of $p$-Lie algebras over~$\Z_p$.
\end{theorem}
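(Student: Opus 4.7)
The plan is to leverage Quillen's theorem (Theorem~\ref{UnivLCS}) to reduce the statement to an isomorphism of restricted universal enveloping algebras, and then pull that back to the level of $p$-Lie algebras via Lemma~\ref{UnivAndLie}. First, I would construct a homomorphism
\[
  \phi\colon L:=\FL_p(u_1,\ldots,u_m)\big/\bigl(u_i^{[p]}=0,\;[u_i,u_j]=0\text{ for }\{i,j\}\in\sK\bigr)\longrightarrow\gr_\bullet\gamma^{[p]}(\Z_p^\sK)
\]
of $p$-Lie algebras over $\Z_p$ by sending $u_i$ to the class $\overline{a_i}\in\gamma^{[p]}_1/\gamma^{[p]}_2$. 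This is well defined because $a_i^p=1$ in $\Z_p^\sK$ forces $\overline{a_i}^{[p]}=0$, and $a_ia_j=a_ja_i$ for $\{i,j\}\in\sK$ forces $[\overline{a_i},\overline{a_j}]=0$. Surjectivity of $\phi$ is routine since the $\overline{a_i}$ generate $\gr_\bullet\gamma^{[p]}(\Z_p^\sK)$ as a $p$-Lie algebra.

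Next I would apply $\mathcal U_p$. Since $\mathcal U_p$ is left adjoint and $\mathcal U_p\FL_p(u_1,\ldots,u_m)\cong T(u_1,\ldots,u_m)$, the relation $u_i^{[p]}=0$ becomes $u_i^p=0$ in $\mathcal U_pL$ and the Lie commutators become associative commutators, giving
\[
  \mathcal U_pL\cong T(u_1,\ldots,u_m)\big/\bigl(u_i^p=0,\;u_iu_j-u_ju_i=0\text{ for }\{i,j\}\in\sK\bigr).
\]
On the other side, Theorem~\ref{UnivLCS} provides an isomorphism $\mathcal U_p\gr_\bullet\gamma^{[p]}(\Z_p^\sK)\cong\gr_\bullet\Z_p[\Z_p^\sK]$ for the augmentation filtration, under which $\overline{a_i}$ corresponds to the class of $a_i-1$ modulo $(\overline{\Z_p[\Z_p^\sK]})^2$.

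To identify the right-hand side, I would combine Proposition~\ref{gagp} with the observation that $\Z_p[\Z_p]\cong\Z_p[u]/(u^p)$ via $u=v-1$ (valid because $(1+u)^p=1+u^p$ in characteristic~$p$). This yields, in the spirit of a Magnus-type substitution $u_i=a_i-1$, an isomorphism of associative algebras
\[
  \Z_p[\Z_p^\sK]\cong T(u_1,\ldots,u_m)\big/\bigl(u_i^p=0,\;u_iu_j-u_ju_i=0\text{ for }\{i,j\}\in\sK\bigr).
\]
The displayed relations are homogeneous of degrees $p$ and $2$ with respect to $\deg u_i=1$, so this algebra already coincides with its associated graded for the augmentation filtration. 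Consequently $\gr_\bullet\Z_p[\Z_p^\sK]$ is the same partially commutative truncated polynomial algebra as $\mathcal U_pL$, and the composed map $\mathcal U_pL\to\gr_\bullet\Z_p[\Z_p^\sK]$ induced by $\mathcal U_p\phi$ sends each generator $u_i$ to the generator $u_i$. Hence $\mathcal U_p\phi$ is an isomorphism, and since $L$ and $\gr_\bullet\gamma^{[p]}(\Z_p^\sK)$ are $\Z_p$-vector spaces, Lemma~\ref{UnivAndLie} lifts this to an isomorphism of $p$-Lie algebras.

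The main delicate point I expect is the homogeneity claim: one must verify that no additional lower-filtration relations appear in $\gr_\bullet\Z_p[\Z_p^\sK]$, equivalently that the ideal generated by $\{u_i^p\}$ and $\{u_iu_j-u_ju_i\colon\{i,j\}\in\sK\}$ in $T(u_1,\ldots,u_m)$ coincides with its leading-term ideal. This is where the Magnus-type substitution $u_i=a_i-1$ does its real work and where characteristic~$p$ is essential via the identity $(1+u)^p=1+u^p$. With this in hand, the remainder of the argument is purely formal, packaging Quillen's theorem, the adjointness of $\mathcal U_p$, and Proposition~\ref{gagp}.
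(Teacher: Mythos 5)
Your proposal is correct and follows essentially the same route as the paper: reduce to restricted universal enveloping algebras via Lemma~\ref{UnivAndLie}, identify $\mathcal U_p$ of the graph product $p$-Lie algebra as the truncated partially commutative algebra, apply Quillen's theorem together with Proposition~\ref{gagp}, and use the substitution $u_i=v_i-1$ (an isomorphism of augmented algebras) plus the fact that the augmentation filtration on the graded algebra $\mathcal U_pL$ coincides with the grading filtration. The paper handles your ``delicate point'' by transporting the associated graded computation through that isomorphism of augmented algebras rather than analysing leading-term ideals directly, but this is the same argument in a different packaging.
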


\begin{proof}
We denote by $L_\sK^{[p]}$ the algebra on the right side of the isomorphism. Let $\Z_p\langle u\rangle=\FL_p(u)/(u^{[p]}=0)$ be the trivial $p$-Lie algebra with one generator. Then $L_\sK^{[p]}$ is the graph product of $\Z_p\langle u\rangle$:
\[
  L_\sK^{[p]}=
  \FL_p(u_1,\ldots,u_m)\big/\bigl(u_i^{[p]}=0,\; 
  [u_i,u_j]=0\text{ for }\{i,j\}\in\sK\bigr)=\bigl(\Z_p\langle u\rangle\bigr)^\sK.
\]

By Lemma~\ref{UnivAndLie}, it is enough to prove that 
\begin{equation}\label{2uea}
  \mathcal U_p\bigl(\gr_\bullet\gamma^{[p]}\bigl(\Z_p^\sK)\bigr)
  \cong\mathcal U_p\bigl(L_\sK^{[p]}\bigr).
\end{equation}
Since the universal enveloping functor $\mathcal U_p\colon\lie_p\to\ass$ is left adjoint and preserves products, it also preserves graph products. Therefore,
\begin{equation}\label{pueapres}
  \mathcal U_p(L_\sK^{[p]})=
  T_{\Z_p}(u_1,\ldots,u_m)\big/\bigl(u_i^p=0,\; 
  u_iu_j=u_ju_i\text{ for }\{i,j\}\in\sK\bigr).
\end{equation}
Now consider the left hand side of~\eqref{2uea}. By Theorem~\ref{UnivLCS}, there is an isomorphism
\[
  \mathcal U_p\bigl(\gr_\bullet\gamma^{[p]}\bigl(\Z_p^\sK)\bigr)
  \cong\gr_\bullet\bigl(\Z_p[\Z_p^\sK]\bigr).
\]
The associated graded algebra on the right hand side is taken with respect to the filtration of the group algebra $\Z_p[\Z_p^\sK]$ by the powers of the augmentation ideal $\overline{\Z_p[\Z_p^\sK]}=\Ker\varepsilon$. By virtue of Proposition~\ref{gagp},
\begin{equation}\label{pgapres}
  \Z_p[\Z_p^\sK]\cong T_{\Z_p}(v_1,\ldots,v_m)/ 
  (v_i^p=1,\;v_iv_j=v_jv_i\text{ for }\{i,j\}\in\sK),
\end{equation}
compare Example~\ref{Z2RCK}. The augmentation $\varepsilon\colon\Z_p[\Z_p^\sK]\to\Z_p$ is given by $v_i\mapsto1$, $i=1,\ldots,m$. Comparing~\eqref{pueapres} with~\eqref{pgapres} we observe that
\[
  \bigl(\mathcal U_p(L_\sK^{[p]}),\varepsilon'\bigr)\to
  \bigl(\Z_p[\Z_p^\sK],\varepsilon\bigr), \quad 
  u_i\mapsto v_i-1
\]
is an isomorphism of augmented algebras, where $\varepsilon'\colon \mathcal U_p(L_\sK^{[p]})\to\Z_p$ is given by $u_i\mapsto0$. Now, $\mathcal U_p(L_\sK^{[p]})$ is the graded quotient of the free associative algebra~\eqref{pueapres} with the grading given by $\deg u_i=1$. The filtration of $\mathcal U_p(L_\sK^{[p]})$ by the powers of the augmentation ideal $\Ker\varepsilon'$ coincides with the filtration defined by the grading. For the latter filtration, the associated graded algebra
$\gr_\bullet\bigl(\mathcal U_p(L_\sK^{[p]})\bigr)$ is $\mathcal U_p(L_\sK^{[p]})$ itself. It follows that
\[
  \mathcal U_p(L_\sK^{[p]})\cong
  \gr_\bullet\bigl(\mathcal U_p(L_\sK^{[p]}),\varepsilon'\bigr)\cong
  \gr_\bullet\bigl(\Z_p[\Z_p^\sK],\varepsilon\bigr)\cong
  \mathcal U_p\bigl(\gr_\bullet\gamma^{[p]}\bigl(\Z_p^\sK)\bigr),
\]
proving~\eqref{2uea} and the theorem.
\end{proof}

Theorem \ref{rlazpk} says that the functor $\gr_\bullet\gamma^{[p]}\colon\grp\to\lie_p$ preserves graph products of cyclic groups $\Z_p$. This result can be extended to a more general class of groups using the following construction.

Let $\sK$ be a simplicial complex on the set~$[m]$, and let $\sK_i$ be a simplicial complex on a finite set set~$V_i$ for $i=1,\ldots,m$. The \emph{substitution complex}~\cite{ab-pa19} is the following simplicial complex on the set $V_1\sqcup\cdots\sqcup V_m$:
\[
  \sK(\sK_1,\ldots,\sK_m)=\bigl\{I_{j_1}\sqcup\cdots\sqcup I_{j_k}\colon
  I_{j_l}\in\sK_{j_l},\;l=1,\ldots,k,\quad\text{and}\quad
  \{j_1,\ldots,j_k\}\in\sK\bigr\}.
\]
When $\sK$ is $m$ disjoint points, we have $\sK(\sK_1,\ldots,\sK_m)=\sK_1\sqcup\cdots\sqcup\sK_m$. When $\sK=\Delta_{[m]}$, the substitution complex is the join: $\sK(\sK_1,\ldots,\sK_m)=\sK_1*\cdots*\sK_m$.

Assume given sequences $\mb X_i=(X_{v,i}\colon v\in V_i)$ of objects in $\cat$ indexed by the elements of $V_i$ for $i=1,\ldots,m$.

\begin{proposition}\label{ppsubst}
The following isomorphism holds for the polyhedral product~\eqref{mcprod}:
\[
  (\mb X_1,\ldots,\mb X_m)^{\sK(\sK_1,\ldots,\sK_m)}\cong
  \bigl(\mb X_1^{\sK_1},\ldots,\mb X_m^{\sK_m}\bigr)^{\sK}.
\]
\end{proposition}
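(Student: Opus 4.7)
The plan is to compute both sides as colimits over the same indexing category.

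First, I would observe that every face of the substitution complex decomposes uniquely as $I=\bigsqcup_{j\in[m]}I_j$ with $I_j:=I\cap V_j\in\sK_j$, subject to $\{j:I_j\ne\varnothing\}\in\sK$, so that the face category of $\sK(\sK_1,\ldots,\sK_m)$ is isomorphic to the poset $\mathcal P$ of such tuples under componentwise inclusion. The left-hand side therefore rewrites as the colimit over $\mathcal P$ of the monoidal products $\prod_{j\in[m]}\prod_{v\in I_j}X_{v,j}$.

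Second, I would unfold the inner polyhedral products on the right-hand side,
\[
  \bigl(\mb X_1^{\sK_1},\ldots,\mb X_m^{\sK_m}\bigr)^{\sK}=\colim^\cat_{J\in\sK}\prod_{j\in J}\colim^\cat_{I_j\in\sK_j}\prod_{v\in I_j}X_{v,j}.
\]
In categories where the monoidal product distributes over colimits in each variable, for instance $\ass$ with the tensor product, I would pull the inner colimits outside and apply Fubini for iterated colimits, obtaining a single colimit indexed by pairs $(J,(I_j)_{j\in J})$. This indexing category admits an essentially surjective functor to $\mathcal P$ given by extending $(I_j)_{j\in J}$ by $\varnothing$ outside $J$; two pairs mapping to the same element of $\mathcal P$ give the same underlying monoidal product (empty factors inserting the unit), so the two colimits coincide with the left-hand side.

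Third, for the fat categories where the monoidal product does not automatically commute with colimits, most notably $\grp$, $\tgp$, $\tmn$ with their cartesian products and $\lie$ with its direct sum, I would argue directly from the explicit presentation given in Example~\ref{cgrpr}. The edges of $\sK(\sK_1,\ldots,\sK_m)^1$ fall into two families: edges inside a single $V_j$, coming from edges of $\sK_j^1$, and edges between $V_{j_1}$ and $V_{j_2}$ with $\{j_1,j_2\}\in\sK$, where every cross pair of vertices is an edge. The first family of commutation relations reduces the free product $\star_{(v,j)}G_{v,j}$ to $\star_j\,\mb G_j^{\sK_j}$, while the second forces $\mb G_{j_1}^{\sK_{j_1}}$ to commute elementwise with $\mb G_{j_2}^{\sK_{j_2}}$ whenever $\{j_1,j_2\}\in\sK$; this is exactly the defining presentation of the graph product $(\mb G_j^{\sK_j})^\sK$. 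Analogous presentation comparisons handle $\ass$ and $\lie$, and the topological case is resolved set-theoretically by matching supports of points inside $\prod_{v,j}X_{v,j}$.

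The main obstacle I expect is justifying the distributivity of the monoidal product over the inner colimit: this is automatic in closed symmetric monoidal categories but fails for cartesian products in $\grp$ and for the direct-sum structure on $\lie$, so the presentation-based argument is needed as a uniform fallback. A secondary subtlety is the essential equivalence between the Grothendieck-style indexing $(J,(I_j)_{j\in J})$ and the face poset $\mathcal P$, which reduces to the harmless observation that adjoining empty components does not change the underlying monoidal product.
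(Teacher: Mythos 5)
Your proposal is correct, and its core computation --- identifying faces of the substitution complex with tuples $(I_1,\ldots,I_m)$, rewriting both sides as colimits over (essentially) that poset, and interchanging the monoidal product with the inner colimits --- is exactly the chain of identifications the paper carries out in a single display. The genuine difference is that the paper performs the key step $\colim_{(I_j)}\prod_j\mb X_j^{I_j}\cong\prod_j\colim_{I_j}\mb X_j^{I_j}$ uniformly and without comment, whereas you flag it as the delicate point and patch the ``non-distributive'' categories ($\grp$, $\tgp$, $\tmn$, $\lie$) with a separate presentation-based argument; that fallback is sound and is essentially the argument one would write for graph products of groups. Two remarks on your dichotomy, though. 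First, it is drawn in slightly the wrong place: the tensor product in $\ass$ also fails to distribute over general colimits (already over coproducts, since $(B\star C)\otimes A$ has one central copy of $A$ while $(B\otimes A)\star(C\otimes A)$ has two non-commuting ones), so $\ass$ does not genuinely belong to your ``automatic'' branch. Second, the case split can be avoided altogether: every face category $\catK$ has the initial object $\varnothing$ and is in particular connected, and in each of $\grp$, $\lie$, $\lie_p$ and $\ass$ a map out of the monoidal product $X\times Y$ (resp.\ $X\oplus Y$, $X\otimes Y$) is the same as a pair of maps with commuting images; from this one checks directly that the monoidal product preserves \emph{connected} colimits in all these categories, which legitimises the paper's uniform one-line interchange (in a completely general symmetric monoidal category with finite colimits some such hypothesis is indeed needed, but only these categories are used). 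Your reduction of the index category of pairs $(J,(I_j)_{j\in J})$ to the face poset of the substitution complex is also fine as stated: the comparison functor is final because the comma category under any given face has an initial object, namely the tuple of its intersections with the $V_j$.
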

\begin{proof}
We have
\begin{multline*}
  (\mb X_1,\ldots,\mb X_m)^{\sK(\sK_1,\ldots,\sK_m)}=
  \colim^\cat_{I_{j_1}\sqcup\cdots\sqcup I_{j_k}
  \in\sK(\sK_1,\ldots,\sK_m)}
  (\mb X_1,\ldots,\mb X_m)^{I_{j_1}\sqcup\cdots\sqcup I_{j_k}}
  \\=\colim^\cat_{I_{j_1}\sqcup\cdots\sqcup I_{j_k}
  \in\sK(\sK_1,\ldots,\sK_m)}
  \mb X_{j_1}^{I_{j_1}}\times\cdots\times\mb X_{j_k}^{I_{j_k}}
  \\\cong
  \colim^\cat_{\{j_1,\ldots,j_k\}\in\sK}\Bigl(
  \bigl(\colim^\cat_{I_{j_1}\in\sK_{j_1}}\mb X_{j_1}^{I_{j_1}}\bigr)\times\cdots\times
  \bigl(\colim^\cat_{I_{j_k}\in\sK_{j_k}}
  \mb X^{I_{j_k}}_{j_k}\bigr)\Bigr)\\=
  \colim^\cat_{\{j_1,\ldots,j_k\}\in\sK}\bigl(
  \mb X_{j_1}^{\sK_{j_1}}\times\cdots\times
  \mb X_{j_k}^{\sK_{j_k}}\bigr)=
  \bigl(\mb X_1^{\sK_1},\ldots,\mb X_m^{\sK_m}\bigr)^{\sK}.\qedhere
\end{multline*}
\end{proof}

\begin{proposition}
The functor $\gr_\bullet\gamma^{[p]}\colon\grp\to\lie_p$ preserves graph products of elementary abelian $p$-groups.
\end{proposition}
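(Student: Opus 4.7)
The plan is to reduce to Theorem~\ref{rlazpk} via the substitution complex machinery of Proposition~\ref{ppsubst}. Let $\mb G=(G_1,\ldots,G_m)$ with $G_i=\Z_p^{n_i}$. The first observation is that an elementary abelian $p$-group is itself a graph product over a simplex: $\Z_p^{n_i}\cong\Z_p^{\Delta_{[n_i]}}$ in the category $\grp$, since the commutativity relations over the complete graph turn the free product of cyclic groups into their direct product.

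Next I would apply Proposition~\ref{ppsubst} in the category $\grp$ to rewrite the graph product of the $G_i$'s as a single graph product of cyclic groups $\Z_p$ over the substitution complex $\sK(\Delta_{[n_1]},\ldots,\Delta_{[n_m]})$:
\[
  \mb G^\sK=\bigl(\Z_p^{\Delta_{[n_1]}},\ldots,\Z_p^{\Delta_{[n_m]}}\bigr)^\sK
  \cong\Z_p^{\sK(\Delta_{[n_1]},\ldots,\Delta_{[n_m]})}.
\]
Theorem~\ref{rlazpk} then identifies $\gr_\bullet\gamma^{[p]}(\mb G^\sK)$ with $\bigl(\Z_p\langle u\rangle\bigr)^{\sK(\Delta_{[n_1]},\ldots,\Delta_{[n_m]})}$ in the category $\lie_p$. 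Now I would invoke Proposition~\ref{ppsubst} a second time, this time in $\lie_p$ (it is symmetric monoidal with direct sum as monoidal product and has finite colimits, which is all that Proposition~\ref{ppsubst} uses), to reorganise this polyhedral product as
\[
  \bigl(\Z_p\langle u\rangle\bigr)^{\sK(\Delta_{[n_1]},\ldots,\Delta_{[n_m]})}\cong
  \Bigl(\bigl(\Z_p\langle u\rangle\bigr)^{\Delta_{[n_1]}},\ldots,
  \bigl(\Z_p\langle u\rangle\bigr)^{\Delta_{[n_m]}}\Bigr)^{\sK}.
\]

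The remaining step is to match each factor: I would verify that $(\Z_p\langle u\rangle)^{\Delta_{[n_i]}}\cong\gr_\bullet\gamma^{[p]}(\Z_p^{n_i})$. On the one hand, the polyhedral product over a full simplex is just the monoidal product in $\lie_p$, i.e.\ the direct sum of $n_i$ copies of $\Z_p\langle u\rangle$ (an abelian $p$-Lie algebra of rank $n_i$ concentrated in degree~$1$, with trivial bracket and trivial $p$-power). On the other hand, for an abelian group $G=\Z_p^{n_i}$ we have $\gamma_m(G)=0$ for $m\ge2$ and $G^p=0$, so the formula $\gamma^{[p]}_k(G)=\prod_{mp^i\ge k}\gamma_m(G)^{p^i}$ gives $\gamma^{[p]}_1(G)=G$ and $\gamma^{[p]}_k(G)=0$ for $k\ge2$; hence $\gr_\bullet\gamma^{[p]}(\Z_p^{n_i})$ is exactly $\Z_p^{n_i}$ in degree $1$ with vanishing bracket and $p$-power. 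The two agree.

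Combining these identifications, and again using that $\gr_\bullet\gamma^{[p]}(G_i)$ coincides with the factors appearing on the right hand side, yields
\[
  \gr_\bullet\gamma^{[p]}(\mb G^\sK)\cong\bigl(\gr_\bullet\gamma^{[p]}(G_1),\ldots,\gr_\bullet\gamma^{[p]}(G_m)\bigr)^\sK,
\]
which is the desired preservation of graph products. The only real content is Theorem~\ref{rlazpk}; the rest is bookkeeping with substitution complexes and the trivial computation of the lower $p$-central series of an elementary abelian $p$-group, so I do not foresee a substantive obstacle, but the step requiring care is the second application of Proposition~\ref{ppsubst} in $\lie_p$, where one must be sure the proposition's proof goes through verbatim (it does, since it only uses commuting of colimits with finite monoidal products, which holds in $\lie_p$).
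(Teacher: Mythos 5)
Your proposal is correct and follows essentially the same route as the paper: rewrite each $\Z_p^{n_i}$ as $\Z_p^{\Delta_{[n_i]}}$, collapse the graph product of graph products into a single graph product over the substitution complex via Proposition~\ref{ppsubst}, apply Theorem~\ref{rlazpk}, and then undo the substitution in $\lie_p$. The only difference is that you spell out the (trivial) verification that $\gr_\bullet\gamma^{[p]}(\Z_p^{n_i})$ is the abelian $p$-Lie algebra on $n_i$ generators with vanishing bracket and $p$-power, which the paper simply asserts.
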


\begin{proof}
Let $\mb G^\sK=(G_1,\ldots,G_m)^\sK$ be the graph product of elementary abelian $p$-groups $G_i=\Z_p^{n_i}$. Then $\gr_\bullet\gamma^{[p]}(G_i)=\Z_p\langle u_1,\ldots,u_{n_i}\rangle$, the trivial $p$-Lie algebra on $n_i$ generators. Note that $G_i$ is the graph product $\Z_p^{\Delta_{[n_i]}}$, and $\Z_p\langle u_1,\ldots,u_{n_i}\rangle$ is the graph product $\Z_p\langle u\rangle^{\Delta_{[n_i]}}$, where $\Delta_{[n_i]}$ is a simplex on $n_i$ vertices. Consider the substitution complex $\sK(\Delta_{[n_1]},\ldots,\Delta_{[n_m]})$. We have
\[
  \mb G^\sK=\bigl(\Z_p^{\Delta_{[n_1]}},\ldots,
  \Z_p^{\Delta_{[n_m]}}\bigr)^\sK
  \cong\Z_p^{\sK(\Delta_{[n_1]},\ldots,\Delta_{[n_m]})}
\]
by Proposition~\ref{ppsubst}. Applying the functor $\gr_\bullet\gamma^{[p]}$ we obtain 
\begin{multline*}
 \gr_\bullet\gamma^{[p]}\bigl((G_1,\ldots,G_m)^\sK\bigr)\cong \gr_\bullet\gamma^{[p]}\bigl(\Z_p^{\sK(\Delta_{[n_1]},\ldots,
 \Delta_{[n_m]})}\bigr)\cong
 \bigl(\Z_p\langle u\rangle)^{\sK(\Delta_{[n_1]},\ldots,
 \Delta_{[n_m]})}\\
 \cong\bigl(\Z_p\langle u\rangle^{\Delta_{[n_1]}},\ldots,
 \Z_p\langle u\rangle^{\Delta_{[n_m]}}\bigr)^\sK=
 \bigl(\gr_\bullet\gamma^{[p]}(G_1),\ldots,
 \gr_\bullet\gamma^{[p]}(G_m)\bigr)^\sK,
\end{multline*}
where the second isomorphism is by Theorem~\ref{rlazpk} and the third by Proposition~\ref{ppsubst}.
\end{proof}

In the case of right-angled Coxeter groups we obtain

\begin{theorem}\label{2lieracg}
For any simplicial complex $\sK$, there is an isomorphism
\[
  \gr_\bullet\gamma^{[2]}(\racg_\sK)\cong
  \FL_2(u_1,\ldots,u_m)\big/\bigl(u_i^{[2]}=0,\; 
  [u_i,u_j]=0\emph{ for }\{i,j\}\in\sK\bigr)
\]
of $2$-Lie algebras over~$\Z_2$.
\end{theorem}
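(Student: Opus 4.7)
The plan is very short because this theorem is essentially a specialisation of a result already established. By the definition of the right-angled Coxeter group given in~\eqref{racgdef}, we have $\racg_\sK = \Z_2^\sK$, so the statement is exactly the $p=2$ case of Theorem~\ref{rlazpk}. Therefore I would simply invoke Theorem~\ref{rlazpk} with $p=2$ and observe that the right-hand side of the isomorphism in Theorem~\ref{rlazpk} for $p=2$ is identical, symbol for symbol, to the right-hand side here.

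For completeness, I would remark that the substance of the proof has already been carried out in the proof of Theorem~\ref{rlazpk}: one identifies the restricted universal enveloping algebra of the claimed $2$-Lie algebra with the associated graded of the group algebra $\Z_2[\racg_\sK]$ with respect to the augmentation ideal filtration, using Proposition~\ref{gagp} and Quillen's Theorem~\ref{UnivLCS}, and then appeals to Lemma~\ref{UnivAndLie} to descend the isomorphism from associative algebras back to the level of $2$-Lie algebras. The only place where specialisation of $p$ actually matters in the argument is the simplification $(x+y)^{[2]} = x^{[2]} + y^{[2]} + [x,y]$ appearing in the defining axioms of a restricted Lie algebra, which is the characteristic-$2$ analogue of the Jacobson formula and does not require any additional verification here.

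There is no genuine obstacle: the entire content of Theorem~\ref{2lieracg} is absorbed by the general result, and the theorem is stated separately only to highlight the case of central interest in the introduction, where it is paired with the identification $\mathcal U_2(\gr_\bullet\gamma^{[2]}(\racg_\sK)) \cong H_*(\varOmega(\C P^\infty)^\sK;\Z_2)$.
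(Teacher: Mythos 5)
Your proposal is correct and matches the paper exactly: the paper states Theorem~\ref{2lieracg} immediately after the general results of Section~6 with the phrase ``In the case of right-angled Coxeter groups we obtain,'' i.e.\ it is obtained precisely by specialising Theorem~\ref{rlazpk} to $p=2$ using $\racg_\sK=\Z_2^\sK$. Your additional remarks about where the real work lies (Quillen's theorem, Proposition~\ref{gagp}, Lemma~\ref{UnivAndLie}) accurately summarise the proof of Theorem~\ref{rlazpk} as given in the paper.
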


We can also give analogue of Proposition~\ref{ueraag} for right-angled Coxeter groups:

\begin{proposition}\label{ueracg}
For a flag complex $\sK$, there is an isomorphism
\[
  \mathcal U_2\bigl(\gr_\bullet\gamma^{[2]}(\racg_\sK)\bigr)\cong
  H_*\bigl(\varOmega(\C P^\infty)^\sK;\Z_2\bigl)
\]
of associative $\Z_2$-algebras, where the loop homology algebra on the right hand side is given by~\eqref{lzkcolim}.
\end{proposition}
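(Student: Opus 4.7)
The plan is to combine Theorem~\ref{2lieracg}, which identifies the $2$-Lie algebra $\gr_\bullet\gamma^{[2]}(\racg_\sK)$ with the graph product $L_\sK^{[2]}=\bigl(\Z_2\langle u\rangle\bigr)^\sK$, with Theorem~\ref{padjs}, which gives the presentation~\eqref{lzkcolim} of the loop homology algebra. The bridge between the two is the restricted universal enveloping functor $\mathcal U_2\colon\lie_2\to\ass$.

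First, I would apply $\mathcal U_2$ to the isomorphism of Theorem~\ref{2lieracg}, yielding
\[
  \mathcal U_2\bigl(\gr_\bullet\gamma^{[2]}(\racg_\sK)\bigr)\cong
  \mathcal U_2(L_\sK^{[2]}).
\]
Since $\mathcal U_2$ is a left adjoint it preserves colimits, and since it preserves direct sums of $p$-Lie algebras (the coproduct in $\lie_2$) it takes the graph product of trivial $2$-Lie algebras $\Z_2\langle u\rangle$ to the graph product of their enveloping algebras $\mathcal U_2\Z_2\langle u\rangle=\Z_2[u]/(u^2)=\Lambda_{\Z_2}[u]$. This is precisely the argument already used in the proof of Theorem~\ref{rlazpk} and gives
\[
  \mathcal U_2(L_\sK^{[2]})\cong
  T_{\Z_2}(u_1,\ldots,u_m)\big/\bigl(u_i^2=0,\;
  u_iu_j=u_ju_i\text{ for }\{i,j\}\in\sK\bigr).
\]

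The only remaining point is to reconcile the commutation relation $u_iu_j=u_ju_i$ coming from $\mathcal U_2$ with the anticommutation relation $u_iu_j+u_ju_i=0$ appearing in~\eqref{lzkcolim}. Since we work over~$\Z_2$ and each $u_i$ satisfies $u_i^2=0$, the two relations are equivalent: $(u_i+u_j)^2=u_i^2+u_iu_j+u_ju_i+u_j^2$, so any two squares-zero elements that commute also anticommute in characteristic~$2$ and vice versa. Therefore the right-hand sides of~\eqref{pueapres} (with $p=2$) and of~\eqref{lzkcolim} (with $R=\Z_2$) coincide.

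Combining these identifications with Theorem~\ref{padjs} for $R=\Z_2$ yields
\[
  \mathcal U_2\bigl(\gr_\bullet\gamma^{[2]}(\racg_\sK)\bigr)\cong
  \mathcal U_2(L_\sK^{[2]})\cong
  \Lambda[u]^\sK\cong H_*\bigl(\varOmega(\C P^\infty)^\sK;\Z_2\bigr),
\]
as claimed. There is no real obstacle here: once Theorem~\ref{2lieracg} is in place, the proposition follows by comparing two explicit presentations, the small subtlety being the characteristic-$2$ identification of commutation with anticommutation for elements of square zero.
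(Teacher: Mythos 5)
Your argument is exactly the one the paper intends (the proposition is stated without a separate proof, being the $\mathcal U_2$-analogue of Proposition~\ref{ueraag}): apply $\mathcal U_2$ to Theorem~\ref{2lieracg}, use that $\mathcal U_2$ preserves graph products to get the presentation~\eqref{pueapres} with $p=2$, observe that over $\Z_2$ this coincides with $\Lambda[u]^\sK$, and invoke Theorem~\ref{padjs} (which is where flagness enters). One small remark: the direct sum is the monoidal \emph{product} in $\lie_2$, not the coproduct (the coproduct is the free product), and the reconciliation of $u_iu_j=u_ju_i$ with $u_iu_j+u_ju_i=0$ needs nothing beyond $-1=1$ in $\Z_2$; neither point affects the correctness of your proof.
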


To make the isomorphism of Proposition~\ref{ueracg} compatible with the grading of $H_*(\varOmega(\C P^\infty)^\sK);\Z_2)$ one needs to set $\deg u_i=1$.

\section*{Appendix. Graph products of groups and algebras: a comparison}\label{sectable}

Group-theoretic results and constructions are given in the left column, and parallel homotopy-theoretic results on loop homology and graph products of algebras are given in the right column. Four rows spanning both columns represent results linking graph products of groups with graph products of algebras and loop homology. For both columns, $\mb X^\sK=\colim^\top_{I\in\sK}\mb X^I$ denotes the polyhedral product of a sequence of pointed topological spaces $\mb X=(X_1,\ldots,X_m)$.

\begin{longtable}{p{0.48\textwidth}|p{0.48\textwidth}}

\hline\\

$\mb G=(G_1,\ldots,G_m)$ a sequence of groups & 
$\mb A=(A_1,\ldots,A_m)$ a sequence of associative algebras with unit over $R$\\[3pt]

\hline\\

Graph product
$\mb G^\sK=\colim^\grp_{I\in\sK}\mb G^I$ &
Graph product
$\mb A^\sK=\colim^\ass_{I\in\sK}\mb A^{\otimes I}$\\[5pt]

\hline\\

The classifying space functor $B\colon\tgp\to\top$ and Moore loop functor $\varOmega\colon\top\to\tmn$ preserve graph products up to homotopy: &

The loop homology functor $H_*\varOmega\colon\top\to\ass$ with field coefficients preserves graph products:\\[1.1\baselineskip]

\qquad$B(\mb G^{\sK})\simeq(B\mb G)^\sK\text{ and } 
\varOmega(\mb X^\sK)\simeq(\varOmega\mb X)^\sK$ &

\qquad$H_*(\varOmega(\mb X^\sK);R)\cong
(H_*(\varOmega\mb X;R))^\sK$\\[2pt]

for flag $\sK$ (Corollary~\ref{bcommut}). &
for flag $\sK$ (Theorem~\ref{looppres}).\\[3pt]

\hline\\

Right-angled Artin group $\raag_\sK=\Z^\sK$ &
Graph product algebra $R[v]^\sK$\\

$=F(a_1,\ldots,a_m)/(a_ia_j=a_ja_i\text{ for
}\{i,j\}\in\sK)$ &
$=T(v_1,\ldots,v_m)/(v_iv_j=v_jv_i\text{ for }\{i,j\}\in\sK)$\\[3pt]

\hline\\

$B(\raag_\sK)\simeq(S^1)^\sK$ (Proposition~\ref{basph}) &
$(S^{2p+1})^\sK$, $p\ge1$\\[3pt]

\hline\\

$\pi_1((S^1)^\sK)\cong\raag_\sK$ & 
$H_*(\varOmega(S^{2p+1})^\sK;R)\cong R[v]^\sK$
(Theorem~\ref{loopsS})\\[3pt]
\hline

\multicolumn{2}{c}{}\\

\multicolumn{2}{c}{$\gr_\bullet\gamma(\raag_\sK)\cong
\FL(v_1,\ldots,v_m)\big/\bigl([v_i,v_j]=0\;
\text{for }\{i,j\}\in\sK\bigr)$ 
(Theorem~\ref{lielcfA})}\\[5pt]

\hline

\multicolumn{2}{c}{}\\

\multicolumn{2}{c}{$\mathcal U\bigl(\gr_\bullet\gamma(\raag_\sK)\bigr)\cong
H_*\bigl(\varOmega(S^{2p+1})^\sK;\Z\bigr)$ for flag $\sK$ (Proposition~\ref{ueraag})}\\[5pt]

\hline\\

Right-angled Coxeter group $\racg_\sK=\Z_2^\sK=$ &
Graph product algebra $\Lambda[u]^\sK=$\\

$F(a_1,\ldots,a_m)/(a_i^2=1, a_ia_j=a_ja_i,
\{i,j\}\in\sK)$ &
$T(u_1,\ldots,u_m)/(u_i^2=0,u_iu_j+u_ju_i=0,
\{i,j\}\in\sK)$\\[3pt]

\hline\\

$B(\racg_\sK)\simeq(\R P^\infty)^\sK$ (Proposition~\ref{basph}) &
$(\C P^\infty)^\sK$\quad the Davis--Januszkiewicz space\\[3pt]

\hline\\

$\pi_1((\R P^\infty)^\sK)\cong\racg_\sK$ & 
$H_*(\varOmega(\C P^\infty)^\sK;R)\cong \Lambda[u]^\sK$
(Theorem~\ref{padjs})\\[3pt]

\hline\\

$\rk=(D^1,S^0)^\sK$\; the real moment-angle complex & $\zk=(D^2,S^1)^\sK$\quad the moment-angle complex\\[3pt]

\hline\\

Homotopy fibration of aspherical spaces &
Homotopy fibration\\[2pt]
\qquad$\mathcal R_\sK\to(\R P^\infty)^\sK\to (\R P^\infty)^m$
&
\qquad$\mathcal Z_\sK\to(\C P^\infty)^\sK\to (\C P^\infty)^m$
\\[3pt]

\hline\\
Short exact sequence of fundamental groups & 
Short exact sequence of loop homology algebras\\[2pt]
\qquad$1\to\racg_\sK'\to\racg_\sK
  \stackrel\Ab\longrightarrow\Z_2^m\to1$
& 
$R\to H_*(\varOmega \zk)\to 
H_*(\varOmega(\C P^\infty)^\sK)\to 
  \Lambda[m]\to R$\\[3pt]

\hline\\

$\pi_1(\rk)$ is the commutator subgroup of $\pi_1((\R P^\infty)^\sK)=\racg_\sK$ & $H_*(\varOmega\zk)$ is the commutator subalgebra of $H_*(\varOmega(\C P^\infty)^\sK)\cong \Lambda[u]^\sK$\\[3pt]  

\hline\\

A minimal basis for 
$\racg'_\sK=\pi_1(\rk)$ consisting of nested group commutators (Theorem~\ref{basgc}) &
A minimal basis for $H_*(\varOmega\zk)$ consisting of nested Lie commutators (Theorem~\ref{baslc})\\[3pt]

\hline\\

$\racg'_\sK=\pi_1(\rk)$ is a free group if and only if $\sK^1$ is a chordal graph~\cite[Theorem~4.3]{pa-ve16} &
For flag~$\sK$,
$H_*(\varOmega\zk)$ is a free algebra if and only if $\sK^1$ is a chordal graph~\cite[Theorem~4.6]{g-p-t-w16}\\[3pt]

\hline

\multicolumn{2}{c}{}\\

\multicolumn{2}{c}{$\gr_\bullet\gamma^{[2]}(\racg_\sK)\cong
\FL_{\Z_2}(u_1,\ldots,u_m)\big/\bigl(u_i^{[2]}=0,\;[u_i,u_j]=0\;\text{for }\{i,j\}\in\sK\bigr)$ 
(Theorem~\ref{2lieracg})}\\[5pt]

\hline

\multicolumn{2}{c}{}\\

\multicolumn{2}{c}{$\mathcal U_2\bigl(\gr_\bullet\gamma^{[2]}(\racg_\sK)\bigr)\cong
H_*\bigl(\varOmega(\C P^\infty)^\sK;\Z_2\bigr)$ for flag $\sK$ (Proposition~\ref{ueracg})}

\end{longtable}

\end{document}